\documentclass[11pt,english]{amsart}

\usepackage{amsmath,leftidx,amsthm}
\usepackage[all]{xy}
\usepackage{xspace}
\usepackage[psamsfonts]{amssymb}
\usepackage[latin1]{inputenc}
\usepackage{graphicx,color,mathrsfs}
\usepackage{hyperref,appendix}
\usepackage{xcolor}

\newtheorem{theorem}{\bf Theorem}[section]
\newtheorem{proposition}[theorem]{\bf Proposition}

\newtheorem{example}[theorem]{\bf Example}

\newtheorem{lemma}[theorem]{\bf Lemma}
\newtheorem{corollary}[theorem]{\bf Corollary}

\newtheorem{remark}[theorem]{\bf Remark}

\def\C{{\mathbb C}}
\def\N{{\mathbb N}}
\def\R{{\mathbb R}}
\def\Z{{\mathbb Z}}

\def\dd{\mathrm{dd}}
\def\d{\mathrm{d}}
\def\P{\mathbb{P}}

\def\supp{\textup{supp}}

\title{Rigidity of the Julia set for Hénon--Sibony maps }
\author{Gabriel Vigny}
\address{LAMFA UMR 7352, Universit\'e de Picardie Jules Verne, 33 rue Saint-Leu, 80039 AMIENS Cedex 1, FRANCE}
\email{gabriel.vigny@u-picardie.fr}
\thanks{The author's research is partially supported by the ANR project DynAtrois (ANR-24-CE40-1163).}

\begin{document}
\begin{abstract} Let $f$ and $g$ be two Hénon--Sibony maps of $\C^k$. We show that if they have the same forward Julia set, then they share a common iterate, thereby extending Lamy's results from dimension 2.
	\end{abstract}

\maketitle

\noindent \textbf{Keywords. Hénon--Sibony maps, Julia sets, Green currents} 
\medskip

\noindent \textbf{Mathematics~Subject~Classification~(2020): 37A25, 37A50, 37F80}

\section{Introduction}

The classification of polynomial automorphisms of $\C^2$ of Friedland and Milnor \cite{FM} establishes complex generalized Hénon maps as the paradigmatic examples of polynomial automorphisms with positive entropy. In higher dimensions, however, the structure of the group of polynomial automorphisms of $\C^k$ is significantly more complex, and no such central family exists. Sibony \cite{Sibony} proposed regular polynomial automorphisms as a natural generalization of Hénon maps. He demonstrated that their rich dynamics can be developed (e.g., \cite{Sibony, superpotentiels}) in a manner analogous to that of Hénon maps, which have been extensively studied over the last thirty years---most notably by Bedford, Lyubich, and Smillie, as well as Forn\ae ss and Sibony (e.g., \cite{BedfordSmillie1, BedfordLyubichSmillie2, FS}).  We shall refer to these as {\it Hénon--Sibony maps}. Such a map $f$ (resp. $f^{-1}$) is a polynomial automorphism of $\C^k$ of degree $d >1$ (resp. $\delta>1$) whose extension to $\P^k$  possesses an indeterminacy set $I_f^+$ (resp. $I_f^-$) such that $I_f^+ \cap I_f^- =\varnothing$. This condition implies the existence of an integer $1 \leq s \leq k-1$ such that $\dim(I_f^+)=k-s-1$, $\dim( I_f^-) =s-1$, and $d^s= \delta^{k-s}$.

For such a Hénon--Sibony map $f$, in analogy with the dynamics of polynomials in $\C$, one can define the \emph{forward Julia set} $K_f^+$ and the \emph{backward Julia set} $K_f^-$ by
\[
K_f^+ := \{ z \in \C^k \; ; \; \limsup_{n\to +\infty} \| f^n(z)\| < \infty \}
\quad \text{and} \quad
K_f^- := \{ z \in \C^k \; ; \; \limsup_{n\to -\infty} \| f^n(z)\| < \infty \},
\]
which are, respectively, the sets of points with bounded forward and backward orbits. 
One can then construct the \emph{Green functions} $G^+_f:\C^k \to \R^+$ and $G^-_f:\C^k \to \R^+$ as
\[
G^+_f(z) := \lim_{n\to \infty} d^{-n} \log^+\|f^n(z)\|
\quad \text{and} \quad
G^-_f(z) := \lim_{n\to \infty} \delta^{-n} \log^+\|f^{-n}(z)\|,
\]
where $\log^+(x) = \max(\log x, 0)$. The Green functions satisfy $K_f^\pm = \{ G_f^\pm = 0 \}$ and make it possible to apply pluripotential theory to the study of the dynamics of $f$. 
Indeed, $G^+_f$ is a plurisubharmonic (psh) function and it satisfies $G^+_f \circ f = d\, G^+_f$. Consequently, $T_f^+ := \dd^c G^+_f$ is a well-defined positive closed $(1,1)$-current on $\C^k$ of mass $1$ (the mass being computed with respect to the Fubini--Study form on $\P^k$). For any $q \leq s$, one can then define the \emph{Green current} $T^+_{f,q} := (T_f^+)^q$. Similarly, for any $p \leq k - s$, one defines $T^-_{f,p} := (T_f^-)^{p}$. 
These currents can then be used to define the \emph{Green measure} $\mu_f$ of $f$ by
\[
\mu_f := T^+_{f,s} \wedge T^-_{f,k-s}.
\]
The measure $\mu_f$ is an invariant and mixing probability measure, and it is the unique measure of maximal entropy $s \log d$ (\cite{BedfordSmillie3, BedfordLyubichSmillie2} in dimension $2$, and \cite{Sibony, superpotentiels, thelin_regular} in higher dimensions).

 In dimension $1$, Beardon~\cite{Beardon-sym,Beardon-sameJ} proved that polynomials sharing the same Julia set must share an iterate (i.e., $f^n = g^m$ for some $n,m \in \N^*$), up to an affine symmetry of the Julia set. In our setting, two Hénon--Sibony maps that share an iterate automatically have the same Julia sets, the same Green functions, and the same Green measure. This naturally raises the question of whether the converse holds, namely to what extent these dynamically defined objects rigidly determine the map.
 
 In dimension $2$, this question was addressed by Lamy in \cite{Lamy_henon}, where he shows that if two Hénon maps share the same forward Julia set, then they must share an iterate. As a consequence, Dujardin and Favre \cite{DF_Manin} proved that the same conclusion already follows from the equality of the Green measure. Since then, this rigidity phenomenon has been revisited and strengthened in several directions, notably by Bera, Pal, and Verma~\cite{BPV1, Bera}. In higher dimension, related questions have also been investigated by Bera and Pal for polynomial shift-like maps~\cite{BeraPal}.

Both Lamy's and Bera, Pal, and Verma's proofs rely on Jung's theorem \cite{Jung}, either directly or through the lens of Bass-Serre theory \cite{Serre} and the Friedland--Milnor classification \cite{FM}. In the two-dimensional case, Bera, Pal, and Verma also rely on the Böttcher coordinate introduced by Hubbard and Oberste-Vorth \cite{HOV}. However, in higher dimensions, a simple classification of automorphisms is unavailable, even for Hénon--Sibony maps; furthermore, the Green function fails to be pluriharmonic on the basin of attraction of $I_f^-$. This article aims to establish a similar result for Hénon--Sibony maps, providing a novel proof for the classical Hénon case. Our main result is as follows.
 
\begin{theorem}\label{tm_main}
	Let $f$ and $g$ be two Hénon--Sibony maps of  $\C^k$. If $K_f^+=K^+_g$, then $f$ and $g$ share a common iterate; that is, there exist positive integers $n$ and $m$ such that $f^n=g^m$. Furthermore, for a given $f$, there are only finitely many such maps $g$ with the same degree as $f$.
\end{theorem}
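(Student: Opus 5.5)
The plan is to pass from equality of Julia sets to a relation between Green functions, and then to a group-theoretic finiteness argument.

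\textbf{Step 1: the Green functions are proportional.} Since $K_f^+=K_g^+=:K^+$, both $G_f^+$ and $G_g^+$ are nonnegative psh functions vanishing exactly on $K^+$. Both grow at most logarithmically, since $G^+\le \log^+\|z\|+C$. Both are maximal off $K^+$ in the appropriate sense: $(\dd^c G^+)^{s+1}=0$ on $\C^k\setminus K^+$, and $\dd^c G^+$ is supported on $\partial K^+$ in the bidegree where this makes sense. I would first show that $G_f^+$ is the pluricomplex Green function of $K^+$ in a suitable restricted sense. The natural tool is the Siciak--Zaharjuta extremal function, but $K^+$ is unbounded, so I would work on the slices or laminations transverse to $I_f^+$ instead. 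The idea is that near $I_f^-$ the set $K^+$ accumulates only on $I_f^+$, and the complement $\C^k\setminus K^+$ is the basin of $I_f^-$. The goal is a maximum-principle comparison on the leaves of the natural foliation near $\overline{K^+}$, giving $G_g^+=c\,G_f^+$. The rough substitute I would try first is to compare the two functions along complex lines, or along entire curves on which $G^+$ is subharmonic and harmonic off $K^+$. Writing $G$ for the common normalized function, this would give $G\circ f=dG$ and $G\circ g=d'G$.

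\textbf{Step 2: the group generated by $f$ and $g$.} Let $\Gamma=\langle f,g\rangle$. Every $h\in\Gamma$ preserves $K^+$ and satisfies $G\circ h=\lambda(h)G$, where $\lambda:\Gamma\to\R_{>0}$ is a homomorphism. Consider the kernel $H=\ker\lambda$. An element of $H$ preserves $G$, hence it preserves $T^+=\dd^c G$ and the sublevel sets $\{G<r\}$. Since $G\ge \log\|z\|-C$ on the region near $I_f^-$, these automorphisms map bounded pieces of $K^+$ in a controlled way.

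\textbf{Step 3: rigidity of $H$.} I would show that elements of $H$ have bounded degree, since $\deg h$ can be read off from the growth of $G\circ h$ along lines. Then I would show that $H$ is finite, using equicontinuity on $K^+\cap K^-$ together with the fact that a polynomial automorphism fixing an open set of the Julia structure is the identity. Concretely, polynomial automorphisms of bounded degree that preserve the compact set $K=K_f^+\cap K_f^-$ form a compact group. That group acts trivially on $\mu_f$ by uniqueness of the maximal entropy measure, applied to $h f h^{-1}$, which shares $K^+$, so the group is finite.

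\textbf{Step 4: common iterate and finiteness.} With $H$ finite, the image $\lambda(\Gamma)\subset\R_{>0}$ is a finitely generated abelian group containing $d$ and $d'$. Both $f^a$ and $g^b$ grow like degrees, since $\lambda(h)=\deg h$ for positive words. I then need $\log d/\log d'\in\mathbb{Q}$. I would get this by showing that the discreteness of $\lambda(\Gamma)$ forces it to be cyclic, which follows once $H$ is finite and every element with $\lambda<1$ has inverse-degree bounded. Then $f^n g^{-m}\in H$, and a further power gives $f^{nN}=g^{mN}$. For the finiteness claim, a $g$ of the same degree as $f$ satisfies $gf^{-1}\in H$, and $H$ is finite.

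The main obstacle is Step 1. Showing that $K^+$ determines $G^+$ up to a scalar in higher dimension is hard because $G^+$ is not pluriharmonic on the basin of $I^-$. My first attempt would be the uniqueness of $T^+_{f,s}$ as the only positive closed current of its bidegree supported on $K^+$ with given mass, a rigidity result due to Dinh--Sibony. With it, $T^+_{f,s}=T^+_{g,s}$, and I would then try to recover $G$ from this equality, possibly via $\mu_f=\mu_g$ in Step 3.
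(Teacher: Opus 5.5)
\paragraph{Main gap: Step 3.} Your skeleton is the reduction the paper also starts from: pass to automorphisms preserving $G^+$, prove there are finitely many, then use pigeonhole. But the step that carries all the difficulty, Step 3, is not proved.

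An element $h\in H$ only satisfies $G^+_f\circ h=G^+_f$. Nothing in your argument controls $G^-_f\circ h$, so there is no reason for $h$ to preserve $K^-_f$, the compact set $K^+_f\cap K^-_f$, or $\mu_f$. The appeal to uniqueness of the measure of maximal entropy is a non sequitur. It only identifies $h_*\mu_f$ as the maximal-entropy measure of $hfh^{-1}$. Sharing $K^+$ does not a priori force sharing the measure, since $\mu_f=T^+_{f,s}\wedge T^-_{f,k-s}$ also depends on $K^-$.

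Showing that an affine map preserving $G^+_f$ also preserves $G^-_f$ is essentially equivalent to the theorem, and the paper spends all of Section 4 on it:
\begin{itemize}
\item It works with the algebraic group $N=\{\beta\in\mathrm{Aff}_k:\ f^n\circ\beta\circ f^{-n}\in\mathrm{Aff}_k\ \forall n\}$.
\item It shows, via a harmonic-current argument and the Dinh--Sibony extremality of $T^+_{f,s}$, that $G^+_f\circ\beta$ is constant on $\supp(T^+_{f,s})$.
\item It uses this to get a proper function $\max(\Phi^+,\Phi^-)$ on $N_1$.
\item If $\dim N_1>0$, it builds an $f$-invariant fibration by $N_1$-orbits whose base map has entropy both $<\log d^s$ (dynamical degrees) and $\ge\log d^s$ (Bowen).
\end{itemize}
Even when $h$ preserves both $G^\pm_f$, ``a compact group preserving $\mu_f$'' is not automatically finite. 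The paper linearizes at the barycenter of $\mu_f$ and uses the Dinh--Sibony equidistribution of $f^{-n}(A^+)\cap f^{n}(A^-)$ to get a finite invariant set spanning $\C^k$.

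\paragraph{Other steps that need repair.}
\begin{itemize}
\item \emph{Step 1.} Your fallback is the right one, but two pieces are missing. First, $\overline{K^+}=K^+\cup I^+$ in $\P^k$ forces $I^+_f=I^+_g$, so $s$ is the same for both maps. Second, you must recover the functions from $T^+_{f,s}=T^+_{g,s}$. Restrict to generic $s$-planes avoiding $I^+$: there both Green functions lie in the Lelong class with the same Monge--Amp\`ere measure, hence coincide. This gives equality, not just proportionality.
\item \emph{Step 3, degree bound.} The degree of $h$ cannot be read off from $G\circ h$, which equals $G$. Let $\delta_f,\delta_g$ be the degrees of $f^{-1},g^{-1}$. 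What works for $h=f^ng^{-m}$ is the bound $G^-_f\circ h\le\delta_f^{-n}\delta_g^{m}\log^+\|z\|+O(1)$, combined with $\max(G^+_f,G^-_f)=\log^+\|z\|+O(1)$. This forces $h$ to be affine once $d_g^m/d_f^n\ge 1$ is close enough to $1$. Since an affine map cannot rescale $G^+_f$ by a factor $\neq 1$, it also gives $\log d_f/\log d_g\in\mathbb{Q}$, replacing your unexplained discreteness claim.
\item \emph{Step 4, common iterate.} $(f^ng^{-m})^N=\mathrm{id}$ does not give $f^{nN}=g^{mN}$, because $f$ and $g$ need not commute. Apply pigeonhole to the elements $f^{qn}g^{-qm}$, $q\in\N$, instead.
\item \emph{Step 4, finitely many $g$.} Your $H$ depends on $g$, so its finiteness does not give finitely many $g$ of the same degree as $f$. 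You need one finite group, depending only on $f$, that contains every $fg^{-1}$; this is the role of $N$ in the paper.
\end{itemize}
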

In Example~\ref{example_notmeasure}, we show that two Hénon--Sibony maps may possess the same Green measure yet fail to share an iterate.

Let us now describe the organization of the article and outline the main ideas of the proof. In Section~\ref{sec2}, we recall that for Hénon--Sibony maps, the condition $K^+_f = K^+_g$ is equivalent to $G_f^+ = G_g^+$. This classical result allows one to reformulate the initial topological question in a measurable framework. We show that if $G_f^+ = G_g^+$, then the algebraic degrees $d_f$ and $d_g$ satisfy $d_f^n = d_g^m$, so that, up to passing to suitable iterates, one may reduce to the case where $f$ and $g$ have the same degree. In the same vein, we prove that for any $q \in \N$, the map $f^{q} \circ g^{-q}$ is an affine automorphism preserving $G_f^+$. This reduces the problem to showing that there are only finitely many affine automorphisms preserving $G_f^+$. This reduction constitutes the starting point of the approaches developed in \cite{Lamy_henon} and \cite{BPV1}.

In Section~\ref{sec3}, we prove Theorem~\ref{tm_main} under the additional assumption that $G^\pm_f = G^\pm_g$. The key idea is that an affine automorphism $\beta$ preserving both $G^+_f$ and $G^-_f$ must preserve the Green measure, hence its barycenter, as well as the indeterminacy sets. This forces $\beta$ to preserve many analytic sets of the form $f^n(L)$ and $f^{-n}(L')$, where $L$ and $L'$ are analytic sets of codimension $s$ and $k-s$, respectively. A deep convergence result of Dinh and Sibony~\cite{DS_saddle} then shows that, for $n$ large enough, $f^n(L) \cap f^{-n}(L')$ spans $\C^k$, which allows us to conclude.

In Section~\ref{sec4}, we introduce the set
\[
N := \left\{ \beta \in \mathrm{Aff}_k \; ; \; \forall n \in \N, \; f^n \circ \beta \circ f^{-n} \in \mathrm{Aff}_k \right\},
\]
where $\mathrm{Aff}_k$ denotes the group of affine automorphisms of $\C^k$. Although $N$ may be larger than the set of affine automorphisms preserving $G^+_f$, it has the crucial advantage of being an algebraic subgroup of $\mathrm{Aff}_k$. Showing that $N$ is finite will yield the desired result. Arguing by contradiction, we assume that $N$ has positive dimension and thus produce a fibration invariant by $f$ on which $f$ will have both topological entropy $<d^s$ and equal to $d^s$. This argument notably relies on another result of Dinh and Sibony~\cite{superpotentiels} concerning the extremality of the Green current $T^+_{f,s}$.

Finally, in Section~\ref{sec5}, we show that when $s=1$, the unique positive closed $(1,1)$-current of mass $1$ supported on $\{G^+_f= a \}$ is $\dd^c \max (G^+_f, a)$.  This argument was communicated to us by Dinh \cite{Dinh_personal}. In contrast, we point out that this uniqueness may fail to hold when $s>1$. Section~\ref{sec6} is then devoted to the study of an example.

\section{Preliminary facts on Hénon--Sibony maps}\label{sec2}
For a Hénon--Sibony map $f$, the Julia set $K^+_f$, the Green function $G_f^+$ and the Green currents $T^+_f$ and $T^+_{f,s}$ are defined in the introduction. 
\begin{lemma}\label{lem_functions_to_currents}
	Let $f$ and $g$ be two Hénon--Sibony maps of $\C^k$. Let $s =\dim(I_f^-)+1$. Then we have the equivalence:
	\begin{enumerate}
		\item $K^+_f= K^+_g$;
		\item $G_f^+= G^+_g$;
		\item $T^+_f= T^+_g$;
		\item  $\dim(I^+_f) \geq  \dim(I^+_g)$ and $T^+_{f,s}= T^+_{g,s}$.
	\end{enumerate}
Furthermore, in all the above cases, we have $I^+_f = I^+_g$.  
\end{lemma}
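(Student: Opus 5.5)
I will prove the cycle (1) $\Rightarrow$ (2) $\Rightarrow$ (3) $\Rightarrow$ (4) $\Rightarrow$ (1), proving along the way that $I_f^+ = I_g^+$ under (1)--(3).

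\emph{(1) $\Rightarrow$ (2).} This rests on the characterization of $G_f^+$ as the pluricomplex Green function (Siciak extremal function) of $K_f^+$. Recall that $G_f^+$ is continuous and psh, vanishes exactly on $K_f^+$, satisfies $G^+_f(z) \le \log^+\|z\| + C$, and is pluriharmonic on $\C^k\setminus K_f^+$. The last property holds only for the maximal top-degree part, so we use maximality: $(\dd^c G_f^+)^k = 0$ off $K_f^+$, since $(T_f^+)^{s+1}=0$ there and $s<k$. I claim that $G_f^+ = \sup\{u \in \mathcal{L}(\C^k) ; u\le 0 \text{ on } K_f^+\}$, where $\mathcal{L}$ is the Lelong class. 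The inequality $\ge$ follows from the domination principle in the Lelong class. For this I need that $G_f^+$ has logarithmic growth with a lower bound $G^+_f \ge \log\|z\| - C$ on a large enough set, and that $(\dd^c G_f^+)^k$ is not charged off $K_f^+$. For Hénon--Sibony maps we do have $G^+_f \ge \log\|z\|-C$ outside a neighborhood of $I^+_f$ in $\P^k$, which is not the whole neighborhood of infinity. Because $K^+_f$ is not compact, I would instead use the comparison principle for the maximal function on $\C^k\setminus K^+_f$ with boundary value $0$. I expect this step to be the main obstacle, or else I would simply cite it as classical (Sibony, Bedford--Smillie). Given this characterization, $G^+_f$ depends only on $K^+_f$, and (2) follows.

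\emph{(2) $\Rightarrow$ (3)} is immediate, since $T^+ = \dd^c G^+$.

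\emph{(3) $\Rightarrow$ (4) and $I^+_f = I^+_g$.} Equality of the currents gives equality of their powers, so $T^+_{f,s} = T^+_{g,s}$. For the indeterminacy sets, view $T^+_f$ as a current of mass $1$ on $\P^k$. Then $I^+_f$ is exactly the set of points of $\P^k$ at infinity where $T^+_f$ has positive Lelong number, equivalently where its extension fails to have continuous potential; outside $I^+_f$, $G^+_f - \log\|z\|$ extends continuously near the hyperplane at infinity. Applying the same description to $g$ gives $I_f^+=I^+_g$, and in particular the dimension inequality in (4).

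\emph{(4) $\Rightarrow$ (1).} The support of $T^+_{f,s}$ is $\partial K^+_f$. Indeed, Dinh--Sibony showed that $\supp T^+_{f,s}=\partial K_f^+$, and this is the key input. Hence $\partial K^+_f=\partial K^+_g$. Now $K^\pm$ are closed, and their closures in $\P^k$ are $K^+_f\cup I^+_f$, so $\overline{K^+_f}$ equals $K^+_f \cup I_f^+$. The dimension hypothesis fixes which $s$-power is the top one for $g$, namely $s_g \ge s$, so $T^+_{g,s}$ is nonzero and its support is still $\partial K^+_g$ (for $s\le s_g$, the support of the power equals $\partial K^+$ by the same Dinh--Sibony result). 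Finally, $\C^k\setminus\partial K^+$ splits into $\mathrm{int}K^+$ and the escaping set. The escaping set is connected: it is a union of preimages of a connected neighborhood of infinity minus $\overline{I^+}$, and the complement of a codimension $\ge 2$ set is connected, or more simply it is connected because $G^+$ is psh and has no local maxima. Each component of the complement of the common boundary lies either in $K^+$ or outside it, and the unbounded escaping component is the same for $f$ and $g$. Hence $K_f^+=K^+_g$.
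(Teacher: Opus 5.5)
\textbf{Main gap: step (4)$\Rightarrow$(1).} This step rests on the claim that $\supp T^+_{f,s}=\partial K^+_f$, and even that $\supp (T^+_g)^{s}=\partial K^+_g$ for every $s\le s_g$. That is not a theorem. For Hénon maps of $\C^2$ it is Bedford--Smillie's result, but for $s\geq 2$ only the inclusion $\supp T^+_{f,s}\subset\partial K^+_f$ holds for the top power. Lower powers need not even be supported in $K^+$.

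Here is a counterexample. Take $F=(f,g)$ on $\C^4$, with $f,g$ Hénon maps of the same degree and $f$ having an attracting fixed point. Then $G^+_F=\max(G^+_f\circ\pi_1,G^+_g\circ\pi_2)$, $s=2$, and $T^+_{F,2}=\pi_1^*T^+_f\wedge\pi_2^*T^+_g$ is supported on $\partial K^+_f\times\partial K^+_g$. However, $\partial K^+_F=\partial(K^+_f\times K^+_g)$ also contains the nonempty set $\mathrm{int}(K^+_f)\times\partial K^+_g$. Moreover, $T^+_F$ charges the ridge $\{G^+_f\circ\pi_1=G^+_g\circ\pi_2>0\}$, which lies outside $K^+_F$. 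So equality of the currents does not give $\partial K^+_f=\partial K^+_g$, and your component argument has nothing to start from.

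The missing idea is the paper's, and it is also where the dimension hypothesis genuinely enters. Since $\dim I^+_g\le\dim I^+_f=k-s-1$, a generic $s$-dimensional linear space $P$ has closure disjoint from $I^+_f\cup I^+_g$. Then $G^+_f|_P$ and $G^+_g|_P$ are both $\log^+\|z\|+O(1)$ on $P$, and by continuity of the Green functions
$(\dd^c G^+_f|_P)^s=T^+_{f,s}\wedge[P]=T^+_{g,s}\wedge[P]=(\dd^c G^+_g|_P)^s$.
Uniqueness for the Monge--Ampère equation in the Lelong class makes $G^+_f-G^+_g$ constant on $P$. The constant is independent of $P$, and it vanishes by evaluating at points of $K^+_f$ and $K^+_g$. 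This gives (2), hence (1).

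\textbf{Second gap: step (1)$\Rightarrow$(2).} You leave this step open, and the route you sketch does not work as stated. Since $(T^+_f)^{s+1}=0$ on all of $\C^k$, we have $(\dd^c G^+_f)^k=0$ everywhere, so maximality carries no information. A comparison principle on the unbounded set $\C^k\setminus K^+_f$ needs control at infinity, and that control fails near $I^+_f$, where $K^+_f$ accumulates and $G^+_f$ is not bounded below by $\log\|z\|-C$.

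The same slicing repairs it. On such a $P$, $K^+_f\cap P$ is compact, $G^+_f|_P$ has full logarithmic growth, it vanishes on $K^+_f\cap P$, and its Monge--Ampère measure is carried by $K^+_f\cap P$. The domination principle then gives $u|_P\le G^+_f|_P$ for any $u$ in the Lelong class with $u\le 0$ on $K^+_f$. These $P$ cover $\C^k$, which proves $G^+_f=V_{K^+_f}$. The paper instead obtains (1)$\Rightarrow$(4) from the Dinh--Sibony rigidity theorem (a positive closed current of bounded mass supported on $K^+_f$ is proportional to $T^+_{f,s}$), after noting $I^+_f=\overline{K^+_f}\cap H_\infty=I^+_g$, so $s_g=s$.

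\textbf{Smaller points.} Your Lelong-number description of $I^+_f$ is correct but unproved. It follows from $G^+_f\le d^{-1}\log^+\|f(z)\|+C$ together with the vanishing of the top-degree part of $f$ on $I^+_f$, which gives Lelong number at least $1/d$. It is simpler to deduce $I^+_f=I^+_g$ from the trivial implications (3)$\Rightarrow$(2)$\Rightarrow$(1) and $\overline{K^+}\cap H_\infty=I^+$. Finally, ``$G^+$ has no local maxima'' only excludes bounded components of the escaping set, not several unbounded ones. Your first justification of connectedness, via preimages of a connected neighborhood of $I^-$, is the one to keep.
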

\begin{proof}
	Obviously $(1)\Leftarrow\ (2)\Leftrightarrow (3) \Rightarrow (4)$. Recall that, in $\P^k$, $\overline{K^+_f}= K^+_f \cup I^+_f$ \cite{Sibony} so $I^+_f$ is characterized by $K^+_f$.	We have that $(1) \Rightarrow (4)$ is an immediate consequence of \cite[Theorem 5.5.4]{superpotentiels} which states that a positive closed current of bounded mass supported by  $K^+_f$ is proportional to $T^+_{f,s}$.  
	
	Let us show $(4) \Rightarrow (2)$. 
	 Consider a generic linear space $P$ of dimension $s$ so that $P \cap (I^+_f\cup I^+_g)=\varnothing$. On $P$, $(dd^c G_f^+)^s= (dd^c G_g^+)^s$ and both functions $G^+_f$ and $G^+_g$ are in the Lelong class of $P$ (e.g. \cite[Theorem 8.4]{DinhSibony_rigidity}). Uniqueness of the solution of the Monge-Ampère equation in the Lelong class implies that $G^+_f$ and $G^+_g$ differ by a constant on $P$. Moving $P$ implies this constant does not depend on $P$ and it is equal to 0 evaluating at points in $K^+_f$ and $K^+_g $. So we have that $(4) \Rightarrow (2)$ and  $I^+_f = I^+_g$ by the above.\end{proof}

 The following lemma shows that a map is linear by using the growth of the Green function. See \cite{Lamy_henon, BPV1, Cantat_Dujardin} for a similar argument.  
 \begin{lemma}\label{lemma_degrees}
 	Let $f$ and $g$ be two Hénon--Sibony maps of $\C^k$, of degree $d_f$ and $d_g$ such that $G^+_f=G^+_g$, then there exist integers $m$ and $n$ such that $d_f^n=d_g^m$ and 	
 	for any $q\in \N$, $f^{qn}\circ g^{-qm}$ is an affine automorphism that preserves $G_f^+$. 
 \end{lemma}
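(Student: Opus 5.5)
The plan is to read off the degrees from the growth of the common Green function, then show that $h_q := f^{qn}\circ g^{-qm}$ preserves $G^+_f$ and has linear growth. Polynomiality together with linear growth then gives affineness.

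\textbf{Step 1 (degrees).} Write $G := G^+_f = G^+_g$. By Lemma~\ref{lem_functions_to_currents} we have $I^+_f = I^+_g$. Recall the standard estimate on $\C^k$:
\[
G(z) \le \log^+\|z\| + C .
\]
Moreover, on a set approaching a generic point at infinity, more precisely along a generic complex line $\ell$ avoiding $I^+_f$ at infinity, we have $G(z) = \log\|z\| + O(1)$. This holds because $G$ restricted to such a line is in the Lelong class and its restriction has $\dd^c$-mass $1$, so $G|_\ell - \log^+|t|$ is bounded.

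From $G\circ f = d_f\,G$ and $G\circ g = d_g\, G$ we get
\[
G\circ f^{a}\circ g^{-b} = d_f^{a} d_g^{-b}\, G .
\]
To prove $d_f^n = d_g^m$ for some $n,m$, I would compare the growth of $f^a\circ g^{-b}$ at infinity. This is a polynomial automorphism, so along any line it grows at least like $|t|^{1/D}$ or so. However, in any case $\log\|F(z)\| \le D\log\|z\|$, so $\lambda := d_f^a d_g^{-b}$ satisfies
\[
\lambda G \le \deg(F)\log^+\|z\| + C .
\]
This alone gives no rationality. The cleaner route I expect is via the known asymptotics. The Green function of a Hénon--Sibony map has Lelong-type growth with a homogeneous part: $G(z) = \log\|z\| + \text{bounded}$ outside a neighborhood of $I^+_f$ in $\P^k$. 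Hence $G\circ F = \lambda G$ forces $\lambda$ to equal a ratio of growth rates of $\|F(z)\|$ on an open cone near infinity, and such a ratio is a rational number $p/r$ with $p,r$ bounded by degrees.

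Concretely, choose $a = \lfloor b\log d_g/\log d_f\rfloor$ so that $\lambda\in[1,d_f)$. Then the polynomial automorphisms $F_b = f^a g^{-b}$ satisfy $\log\|F_b\| \le \lambda\log\|z\|+C_b$ on an open set of directions. Their inverses satisfy the analogous bound, so I expect $\lambda$ to take only the values $1,\dots$ in a finite set of rationals. By pigeonhole, two values of $b$ give the same $\lambda$, hence $d_f^{a}d_g^{-b} = d_f^{a'}d_g^{-b'}$, which yields $d_f^n = d_g^m$.

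\textbf{Step 2 (affineness).} With $d_f^n=d_g^m$, set $h = h_q$. Then
\[
G\circ h = d_f^{qn}d_g^{-qm}\,G = G,
\]
so $h$ preserves $G$, and likewise $h^{-1}$ does. Take a point $z$ outside a fixed neighborhood $V$ of $I^+_f$ at infinity, where $G = \log\|z\|+O(1)$. Then
\[
\log\|h(z)\| \ge G(h(z)) - C = G(z) - C = \log\|z\| - C',
\]
and conversely, using $G\le \log^+\|\cdot\|+C$ and that $h(z)$ also lies outside $V$ for most $z$, we get $\|h(z)\|\le C\|z\|$ on an open cone of directions. A polynomial bounded by $C\|z\|$ on a nonempty open cone has degree $\le1$ (restrict to lines through the cone). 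Thus $h$ is affine.

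The main obstacle is Step 1, the rationality/pigeonhole of $\lambda$. I would handle it by the affine-type argument applied to $F_b$ with $\lambda<d_f$: the degree of $F_b$ restricted to generic lines must equal $\lambda$ exactly, hence $\lambda$ is an integer in $[1,d_f)$. That gives finitely many values, and the pigeonhole closes the argument.
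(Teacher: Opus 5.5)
Your proposal has a genuine gap: both steps rest on the same unproved claim, and that claim is the heart of the lemma.

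\textbf{The gap.} In Step~2 you need an \emph{upper} bound $\log\|h(z)\|\le G(h(z))+C$. This holds only if $h(z)$ stays away from $I^+_f$ near infinity, and you assert it ``for most $z$'' without justification. There is no reason for it. The map $g^{-1}$ contracts $H_\infty\setminus I^-_g$ into $I^+_g=I^+_f$, so a priori the top-degree part of $h=f^{qn}\circ g^{-qm}$ may send every generic direction into $I^+_f$. There $G^+_f$ gives no lower control on $\log\|\cdot\|$, since it vanishes on $K^+_f$, which accumulates on $I^+_f$. The identities $G^+_f\circ h=G^+_f$ and $G^+_f\circ h^{-1}=G^+_f$ give you the lower bound $\log\|h(z)\|\ge\log\|z\|-C$ on a cone, but no upper bound.

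Step~1 fails for the same reason. The principle that $G\circ F=\lambda G$ forces $\lambda=\deg(F|_\ell)$ on generic lines $\ell$ is false: for $F=g^{-b}$ one has $G\circ F=d_g^{-b}G$ while $\deg F=\delta_g^b$. In general you only get $\lambda\le\deg F$, which gives no integrality. Your choice of $a$ also puts $\lambda$ in $(d_f^{-1},1]$, not $[1,d_f)$. So ``$\lambda$ is an integer'' would already mean $\lambda=1$, which is what you are trying to prove.

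\textbf{The missing ingredient} is the backward Green function, which controls exactly the directions where $G^+_f$ is blind.
\begin{itemize}
\item By Lemma~\ref{lem_functions_to_currents}, $I^+_f=I^+_g$, so both maps have the same $s$. Hence $d_f^s=\delta_f^{k-s}$ and $d_g^s=\delta_g^{k-s}$.
\item Choose $a,b\ge 1$ with $d_f^a\le d_g^b<2^{1/s}d_f^a$. This is possible whether or not $\log d_f/\log d_g$ is rational, and it gives $\delta_g^b<2\delta_f^a$.
\item For $F=f^a\circ g^{-b}$ you have $G^+_f\circ F=d_f^ad_g^{-b}G^+_f\le\log^+\|z\|+O(1)$.
\item Using $G^-_f\le\log^+\|\cdot\|+O(1)$ and $\deg g^{-b}=\delta_g^b$, you also have $G^-_f\circ F=\delta_f^{-a}\,G^-_f\circ g^{-b}\le\delta_f^{-a}\delta_g^{b}\log^+\|z\|+O(1)$.
\item Since $\max(G^+_f,G^-_f)=\log^+\|z\|+O(1)$ on all of $\C^k$, this yields $\log^+\|F(z)\|\le c\log^+\|z\|+O(1)$ globally with $c<2$. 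Hence $F$ is affine.
\item Then $F^*T^+_f=d_f^ad_g^{-b}\,T^+_f$ must have mass $1$, which forces $d_f^a=d_g^b$. This also rules out the irrational case, where the inequality could be taken strict.
\item The same estimate applied to $f^{qa}\circ g^{-qb}$ gives your Step~2.
\end{itemize}
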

\begin{proof} Let $f$ and $g$ be as above. 	Let $\delta_f$ and $\delta_g$ denote the algebraic degree of $f^{-1}$ and $g^{-1}$. By Lemma~\ref{lem_functions_to_currents},  one has $I_f^+=I_g^+$, thus   $d_f^s=\delta_f^{k-s}$ and   $d_g^s= \delta_g^{k-s}$.
	  Since 
	 $\{d_f^n d_g^m, \ n,m\in \Z \}$ is a multiplicative subgroup of $\R_{>0}$, replacing $f$ and $g$ by iterates and possibly $f$ by $g$, we assume that $d^s_f\leq d^s_g< 2 d^s_f$. By the above, this implies $\delta^{k-s}_f\leq \delta^{k-s}_g< 2 \delta^{k-s}_f$.  We deduce from that $d_f d_g^{-1}\leq 1$ and $\delta_f^{-1} \delta_g<2$. Using the invariance 
	$G^+_f \circ f = d_f G^+_f$, $G^+_f \circ g^{-1} = d_g^{-1} G^+_f$ (\cite[Remarque 2.2.15]{Sibony}) gives
	\begin{equation}\label{pourG+}		
		G^+_f\circ f \circ g^{-1} = d_f d_g^{-1} G^+_f. 
	\end{equation}
Similarly, we have  $G^-_f\circ f \circ g^{-1} =  \delta_f^{-1} G^-_f\circ g^{-1}$. 
Now, using $G^{-}_f(z)\leq \log^+\|z\|	+O(1)$, which is an immediate consequence of \cite[Théorème 1.6.1]{Sibony}, and using the fact $\log^+\|g^{-1}(z)\|\leq \delta_g \log^+\|z\| +O(1)$ since $g^{-1}$ is a rational map of degree $\delta_g$, imply
	\begin{equation}\label{pourG-}		
 G^-_f\circ f \circ g^{-1} (z)\leq  \delta_f^{-1} \delta_g \log^+\|z\| + O(1).
\end{equation}
Now, we have that $G_f :=\max (G_f^+, G^-_f)= \log^+ \|z\|+O(1)$ (e.g. \cite[Theorem 8.4]{DinhSibony_rigidity}). So combining \eqref{pourG+} and \eqref{pourG-} with $d_f d_g^{-1}\leq 1$ and $\delta_f^{-1} \delta_g<2$ gives
\[ 	 \log^+ \|f\circ g^{-1}(z)\| < 2\log^+ \|f\circ g^{-1}(z)\|+O(1). \]
In particular, the polynomial map $f\circ g^{-1}$ has degree $<2$ so it has degree $1$. This implies that $f\circ g^{-1}$ is an affine automorphism of $\C^k$ and by \eqref{pourG+}, it preserves $G_f^+$. We can replace $f$ by $f^q$ and $g$ by $g^q$ and the same argument apply so for any $q\in \N$,  $f^{q}\circ g^{-q}$ is an affine automorphism that preserves $G_f^+$.
\end{proof}

\section{Hénon--Sibony maps sharing both filled Julia sets.}\label{sec3}
In this section, we consider two Hénon--Sibony maps that share both filled Julia sets $K^+_f$ and $K^-_f$ and prove the main theorem under that additional hypothesis. 

The following lemma is where we use our main ingredient which is a convergence result of Dinh and Sibony \cite{DS_saddle}
\begin{lemma}\label{lemma_affine} Let $f$ be a Hénon--Sibony maps of $\C^k$. Then, the set 
	\[G:= \{\alpha \in \mathrm{Aff}(\C^k) , \alpha^*(T^\pm_f)= T^\pm_f \}\] is a finite group and $\begin{cases}
		L_f:&G \to G\\
		&\alpha \mapsto f\circ \alpha \circ f^{-1}
	\end{cases} $ is a well defined group isomorphism. 
\end{lemma}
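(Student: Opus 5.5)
Here is the plan. Let $\alpha\in G$.

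\textbf{Step 1: $\alpha$ preserves the Green functions and the dynamical objects.} Since $\alpha^*T^\pm_f=T^\pm_f$, the functions $G^\pm_f\circ\alpha$ and $G^\pm_f$ differ by a pluriharmonic function. This function has at most logarithmic growth, because $\alpha$ is affine and $G^\pm_f\le \log^+\|z\|+O(1)$. Hence it is constant. Evaluating on $K^+_f$, which is preserved as the support of $T^+_{f,s}$ (Lemma~\ref{lem_functions_to_currents}), shows the constant is $0$. So $G^\pm_f\circ\alpha=G^\pm_f$. Consequently $\alpha$ preserves $K^\pm_f$, $I^\pm_f$, the currents $T^+_{f,q}$ and $T^-_{f,p}$, and the Green measure $\mu_f$.

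\textbf{Step 2: $L_f$ is well defined and an isomorphism.} For $\alpha\in G$, put $\beta=f\circ\alpha\circ f^{-1}$. Then
\[
G^+_f\circ\beta=d^{-1}G^+_f\circ f\circ\alpha\circ f^{-1}\cdot d\,d^{-1}=G^+_f,
\]
using $G^+_f\circ f=dG^+_f$ twice, and similarly $G^-_f\circ\beta=G^-_f$. The map $\beta$ is a polynomial automorphism. Since $G_f=\max(G^+_f,G^-_f)=\log^+\|z\|+O(1)$ and $\beta$ preserves $G_f$, we get $\log^+\|\beta(z)\|=\log^+\|z\|+O(1)$. This forces $\deg\beta=1$, so $\beta$ is affine and $\beta\in G$. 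The same argument applied to $f^{-1}$ shows that $f^{-1}\circ\alpha\circ f\in G$. Thus $L_f$ is a bijection with inverse $\alpha\mapsto f^{-1}\alpha f$. It is clearly a homomorphism, and $G$ is a group because the defining condition is stable under composition and inversion.

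\textbf{Step 3: $G$ is finite.} $G$ is a closed subgroup of $\mathrm{Aff}_k$.
\begin{itemize}
\item It is compact. Elements of $G$ preserve the compact set $K^+_f\cap K^-_f=\supp\mu_f$, which spans $\C^k$ affinely (otherwise $\mu_f$ would charge a hyperplane, which is impossible by extremality or mixing). Preserving a spanning compact set gives bounded linear parts.
\item It fixes the barycenter $b$ of $\mu_f$. Hence, after conjugating, $G$ is a compact subgroup of $GL_k$, contained in a unitary group.
\end{itemize}
For finiteness it suffices to find finitely many points, fixed pointwise by some finite-index subgroup, that affinely span $\C^k$. Using Step~2, every $\alpha\in G$ satisfies $f^{-n}\alpha f^{n}\in G$. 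Fix generic linear subspaces $L$ of codimension $s$ through suitable points and $L'$ of codimension $k-s$. By Dinh--Sibony \cite{DS_saddle}, the normalized currents $d^{-sn}[f^{-n}(L)]$ (pullbacks) converge to $T^+_{f,s}$, and similarly for $L'$ toward $T^-_{f,k-s}$. The intersections $f^{-n}(L)\cap f^{n}(L')$ equidistribute toward $\mu_f$, so for large $n$ they span $\C^k$.

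A cleaner route that I would try first is to use Step~1 plus compactness: $G$ is a compact Lie group. If its identity component $G^0$ were nontrivial, it would contain a one-parameter unitary subgroup $e^{tA}$ preserving $G^\pm_f$ and $\mu_f$. Conjugating by $f^n$ (Step~2) gives a family of affine maps $f^n e^{tA}f^{-n}\in G$, all lying in the compact group $G$. The derivative of this family at $t=0$ is a polynomial vector field $f_*^nV$. For it to remain affine for all $n$ is a strong constraint. Its degree should grow like $d^n$ unless $V$ is tangent to a structure invariant by $f$, and ruling that out is where the $\mu_f$-spanning or saddle equidistribution enters.

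\textbf{Main obstacle.} The finiteness of $G$, namely excluding a positive-dimensional compact group, is the main obstacle. I would handle it with the Dinh--Sibony equidistribution as indicated in the section's introduction. The group $G$ permutes the algebraic sets $f^{n}(L)\cap f^{-n}(L')$ for $L,L'$ chosen $G$-invariant; for instance, take $L,L'$ to be linear spaces through the fixed point $b$, invariant under the torus in $G^0$. These sets are finite, spanning for large $n$, and permuted by $G$. Hence $G^0$ fixes spanning points, so $G^0$ is trivial and $G$ is finite.
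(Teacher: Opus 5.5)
Steps 1 and 2 are correct and match the paper's argument. The gap is in Step 3: you never produce the invariant sets that the finiteness argument needs.

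\textbf{Why your choices of $L,L'$ fail.} For $G$ to permute $f^{-n}(L)\cap f^n(L')$, $L$ and $L'$ must be preserved by \emph{every} element of $G$. To get $\alpha(f^{-n}(L))=f^{-n}(L)$ you use $f^n\circ\alpha\circ f^{-n}(L)=L$. As $\alpha$ runs over $G$, the map $f^n\circ\alpha\circ f^{-n}$ runs over $L_f^n(G)=G$, not over whatever subgroup you used to select $L$.
\begin{itemize}
\item Generic linear subspaces are not $G$-invariant.
\item Subspaces through $b$ that are invariant under a torus $T\subset G^0$ are in general not invariant under $L_f^n(T)=f^nTf^{-n}$, which is another torus.
\end{itemize}
So neither choice shows that $G$, or even $T$, permutes $f^{-n}(L)\cap f^n(L')$. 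You correctly say that $L,L'$ should be "chosen $G$-invariant", but you never exhibit such sets of the right dimensions to which the Dinh--Sibony equidistribution applies. That is exactly the missing step. The Lie-algebra route only restates the difficulty.

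\textbf{The missing idea.} It is already half-stated in your Step 1. After translating the barycenter $b$ to the origin, every element of $G$ is linear, and every element preserves $I^\pm_f$ (e.g.\ because $\supp(T^+_{f,s})\cap H_\infty=I^+_f$).
\begin{itemize}
\item Hence $G$ preserves the cones $A^\pm:=\overline{\pi^{-1}(I^\pm_f)}$, where $\pi:\P^k\dasharrow H_\infty$ is the projection from the origin.
\item $A^+$ has dimension $k-s$, $A^-$ has dimension $s$, and $\overline{A^\pm}\cap H_\infty=I^\pm_f$.
\item Each $f^{\pm n}\circ\alpha\circ f^{\mp n}$ lies in $G$ by Step 2, so it is also linear and preserves $A^\pm$. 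Therefore every $\alpha\in G$ satisfies $\alpha(f^{-n}(A^+))=f^{-n}(A^+)$ and $\alpha(f^n(A^-))=f^n(A^-)$.
\item By \cite[Remark 5.15]{DS_saddle}, the finite sets $f^{-n}(A^+)\cap f^n(A^-)$ equidistribute toward $\mu_f$. Since $\mu_f$ charges no analytic set, this set spans $\C^k$ for some $n_0$.
\item The linear group $G$ permutes this spanning finite set, so $G$ is finite.
\end{itemize}
This makes your compactness discussion unnecessary.

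\textbf{Minor inaccuracies.} Neither affects the argument: $K^+_f\cap K^-_f$ can be strictly larger than $\supp\mu_f$, and $\supp T^+_{f,s}$ need not be all of $K^+_f$.
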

\begin{proof}
 The fact that \(G\) is a group is immediate. Let us show that 
 \(L_f : G \to G\) is a well-defined group isomorphism. For this, we only need to check that for any 
 \(\alpha \in G\), we have \(f \circ \alpha \circ f^{-1} \in G\). First, 
 \(f \circ \alpha \circ f^{-1}\) is a polynomial automorphism of \(\mathbb{C}^k\) that still leaves 
 \(T_f^+\) and \(T_f^-\) invariant. In particular, by Lemma~\ref{lem_functions_to_currents}, we have the equalities  
 \(G_f^+ = G_f^+ \circ f \circ \alpha \circ f^{-1}\) and 
 \(G_f^- = G_f^- \circ f \circ \alpha \circ f^{-1}\).  
 Since \(\max(G_f^+,G_f^-)=\log^+\|z\|+O(1)\), we deduce that 
 \(\log^+\|f \circ \alpha \circ f^{-1}(z)\| = \log^+\|z\| + O(1)\).  
 This implies that \(f \circ \alpha \circ f^{-1}\) has algebraic degree \(1\), hence belongs to \(G\).
 
 It remains to show that \(G\) is finite. Pick \(\alpha \in G\). Then  
 \(\alpha^*((T_f^+)^s \wedge (T_f^-)^{k-s}) = (T_f^+)^s \wedge (T_f^-)^{k-s}\), so 
 \(\alpha^*(\mu_f) = \mu_f = \alpha_*(\mu_f)\).  
 Consider the barycenter
 \[
 p := \int z \, \mathrm{d}\mu_f(z) \in \mathbb{C}^k,
 \]
where \(z=(z_1,\dots,z_k)\). Since \(\alpha\) is affine,
  \[
 \alpha(p) = \int \alpha(z)\,\mathrm{d}\mu_f(z)
 = \int z\,\mathrm{d}\alpha_*(\mu_f)(z)
 = p.
 \] 
 Up to an affine change of coordinates, we may assume that \(p=(0,\dots,0)\in\mathbb{C}^k\) is the origin, so that \(G\) is a linear group, i.e. \(G \subset \mathrm{GL}(\mathbb{C}^k)\).  
 Consider homogeneous coordinates on \(\mathbb{P}^k\) given by 
 \(z=[z_0:z_1:\dots:z_k]\), so that \(\mathbb{C}^k\) is identified with \(\{z_0=1\}\) and 
 \(H_\infty := \{z_0=0\}\) is the hyperplane at infinity.  
 Let \(\pi : \mathbb{P}^k \dasharrow H_\infty\) be the meromorphic projection defined by 
 \(\pi([z_0:z_1:\dots:z_k]) = [z_1:\dots:z_k]\).  
 Since \(\alpha^*((T_f^+)^s) = (T_f^+)^s\) and the support of \((T_f^+)^s\) intersected with \(H_\infty\) is equal to \(I_f^+\), any \(\alpha \in G\), viewed as an automorphism of \(\mathbb{P}^k\), fixes \(I_f^+\) (and similarly \(I_f^-\)).  
 As \(\alpha\) is linear, it also fixes  $A^+:=\overline{\pi^{-1}(I_f^+)}$ (resp. $A^-:=\overline{\pi^{-1}(I_f^-)}$) which is an algebraic set of dimension $k-s$ (resp. of dimension $s$).  
 
 By the above, we deduce that for any $n \in \N$, $f^n \circ \alpha \circ f^{-n}(A^+)=A^+$, so  $\alpha(f^{-n}(A^+))= f^{-n}(A^+)$. Similarly, we have $\alpha(f^{n}(A^-))= f^{n}(A^-)$. Now, by \cite[Remark 5.15]{DS_saddle}, we know that points in $f^{-n}(A^+)\cap f^n(A^-)$ are equidistributed with respect to $\mu_f$ as $n\to \infty$. Since $\mu_f$ does not charge pluripolar sets, hence analytic sets, we deduce that the finite set $f^{-n_0}(A^+)\cap f^{n_0}(A^-)$ spans $\C^k$ for a suitably large $n_0$. Now, any $\alpha \in G$ fixes  $f^{-n_0}(A^+)\cap f^{n_0}(A^-)$ and there are only finitely many linear maps fixing $f^{-n_0}(A^+)\cap f^{n_0}(A^-)$,
 so that $G$ is finite. 	 
\end{proof}
\begin{remark}\normalfont While \cite{DS_saddle} proves a much stronger result that what we need (i.e\, $\mathrm{span}(f^{-n_0}(A^+)\cap f^{n_0}(A^-))=\C^k$ for some $n_0$), we have found no simpler argument. Note that we can show that $f^{-n_0}(A^+)\cap A^- \backslash \{(0,\dots,0)\}\neq \varnothing$ and $f^{n_0}(A^-)\cap A^+ \backslash \{(0,\dots,0)\}\neq \varnothing$ for some $n_0$ using arguments similar to \cite[Proposition 7.1]{DF_Manin}. This would be enough to conclude in dimension $2$, where $A^\pm$ are one-dimensional vector spaces but is insufficient in higher dimensions ($\mathrm{span}(f^{-n_0}(A^+)\cap A^-)$ may have dimension $<k-s$). 
\end{remark}

\begin{lemma}\label{lemma_main} Let $f$ and $g$ be two Hénon--Sibony maps of $\C^k$. If $T^\pm_f= T^\pm_g$, then $f$ and $g$ share an iterate.   Furthermore, for a given $f$, there are only finitely many such maps $g$ with the same degree as $f$.
\end{lemma}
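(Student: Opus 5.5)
Since $T^+_f=T^+_g$, Lemma~\ref{lem_functions_to_currents} gives $G^+_f=G^+_g$, and the same lemma applied to $f^{-1}$ and $g^{-1}$ (also Hénon--Sibony maps) gives $G^-_f=G^-_g$. We may therefore apply Lemma~\ref{lemma_degrees} and obtain integers $n,m\geq 1$ with $d_f^n=d_g^m$. Replacing $f$ by $F:=f^n$ and $g$ by $H:=g^m$ does not change any Green function or Green current, so both hypotheses survive. For every $q\in\N$, Lemma~\ref{lemma_degrees} says that $\alpha_q:=F^{q}\circ H^{-q}$ is an affine automorphism preserving $G^+_f$.

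The next step is to show that each $\alpha_q$ also preserves $G^-_f$, so that $\alpha_q$ lies in the group $G$ of Lemma~\ref{lemma_affine}. The inverses $F^{-1}$ and $H^{-1}$ have degrees $\delta_F,\delta_H$ with $d_F^s=\delta_F^{k-s}$ and $d_H^s=\delta_H^{k-s}$; since $d_F=d_H$, this forces $\delta_F=\delta_H$. Then
\[
G^-_f\circ F^q\circ H^{-q}=\delta_F^{-q}\,G^-_f\circ H^{-q}=\delta_F^{-q}\delta_H^{q}\,G^-_f=G^-_f,
\]
and the analogous identity holds for $G^+_f$. So $\alpha_q^*T^\pm_f=T^\pm_f$, that is, $\alpha_q\in G$ for every $q$.

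By Lemma~\ref{lemma_affine}, $G$ is finite. Hence by pigeonhole there are $q<q'$ with $\alpha_q=\alpha_{q'}$, i.e.
\[
F^{q}H^{-q}=F^{q'}H^{-q'}.
\]
Rearranging gives $H^{q'-q}=F^{-q}F^{q'}=F^{q'-q}$, so $g^{m(q'-q)}=f^{n(q'-q)}$, which is the common iterate.

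For the finiteness statement, suppose $g$ has the same degree as $f$. Then the argument of Lemma~\ref{lemma_degrees} applies with $n=m=1$ directly: the bounds $d_fd_g^{-1}\le1$ and $\delta_f^{-1}\delta_g<2$ hold trivially because $\delta_f=\delta_g$. So $f\circ g^{-1}$ is affine, and by the computation above with $q=1$ it lies in $G$. Hence $g=\alpha^{-1}\circ f$ for some $\alpha\in G$, and since $G$ is finite there are at most $|G|$ such maps $g$.

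The only delicate point is checking that the degree bookkeeping really gives $\delta_F=\delta_H$ (through the common $s$, guaranteed by $I^+_f=I^+_g$ in Lemma~\ref{lem_functions_to_currents}) and that Lemma~\ref{lem_functions_to_currents} may legitimately be applied to $f^{-1},g^{-1}$. Granted these, the proof is a direct combination of Lemmas~\ref{lemma_degrees} and~\ref{lemma_affine}.
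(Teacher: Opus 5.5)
Your proof is correct and follows the paper's argument. You pass to iterates of equal degree via Lemma~\ref{lemma_degrees}, note that $\alpha_q=F^q\circ H^{-q}$ preserves both $T^\pm_f$ and so lies in the finite group $G$ of Lemma~\ref{lemma_affine}, conclude by pigeonhole, and bound the number of same-degree maps $g$ by $|G|$ via $g=\alpha^{-1}\circ f$. Your degree bookkeeping ($\delta_F=\delta_H$ from the common $s$) and your check that $\alpha_q$ preserves $G^-_f$ just spell out a step the paper states without detail.
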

\begin{proof}
	Let $f$ and $g$ be as in the lemma. By Lemma~\ref{lemma_degrees}, up to iterating $f$ and $g$, we can assume that $f$ and $g$ have common degree $d$ and for all $n$, we have that $\alpha_n:=f^n\circ g^{-n}$ is an affine automorphism which preserves both $T^+_f$ and $T^-_f$. By Lemma~\ref{lemma_affine} whose notations we use, $\alpha_n \in G$ for all $n$ and $G$ is finite. In particular, there exist integers $n \neq m$ such that $\alpha_n =\alpha_m$. This implies $f^{n-m}=g^{n-m}$. 
	
	Finally, observe that for all $g$ which have the same degree than $f$ and such that  $T^\pm_f= T^\pm_g$, then $f\circ g^{-1}$ is in $G$ which is finite by Lemma~\ref{lemma_affine}. 
\end{proof}

The following example shows that two Hénon--Sibony maps can have the same Green measure while not sharing an iterate. It also gives that a Hénon--Sibony map can preserve the forward Julia set of another Hénon--Sibony map without sharing an iterate.  
\begin{example}\label{example_notmeasure}\normalfont  	Let $\pi_i:\C^4 \to \C^2$ for $i=1,2$ denote the projections defined by $\pi_1(x,y,z,w)=(x,y)$ and $\pi_2(x,y,z,w)=(z,w)$. Let $f$ be a Hénon map of $\C^2$ of degree $d$. Then
	\[
	F(x,y,z,w)=\bigl(f(x,y),\,f(z,w)\bigr)
	\quad \text{and} \quad
	G(x,y,z,w)=\bigl(f(x,y),\,f^{-1}(z,w)\bigr)
	\]
	are both Hénon--Sibony maps of $\C^4$ with $s=2$. The Green currents of order $2$ associated with $F$ and $G$ can be expressed in terms of those of $f$ via
	\[
	T^+_{F,2}= \pi_1^*(T^+_f)\wedge \pi_2^*(T^+_f), 
	\quad
	T^-_{F,2}= \pi_1^*(T^-_f)\wedge \pi_2^*(T^-_f),
	\]
	\[
	T^+_{G,2}= \pi_1^*(T^+_f)\wedge \pi_2^*(T^-_f),
	\quad
	T^-_{G,2}= \pi_1^*(T^-_f)\wedge \pi_2^*(T^+_f).
	\]
	From this, one verifies that
	\[
	\mu_F = \pi_1^*(\mu_f)\wedge \pi_2^*(\mu_f) = \mu_G.
	\]
	Moreover, $G(K^+_F)= K^+_F$. Clearly, $F$ and $G$ commute ($F\circ G = G\circ F$), but they do not share any iterate.		
\end{example}

\section{Proof of Theorem\ref{tm_main}}\label{sec4}
By Lemma~\ref{lemma_degrees}, we can thus assume that $f$ and $g$ have the same degree (so that $n=m=1$). To simplify the notations, we let $d$ be the common degree of $f$ and $g$ and $\delta$ the common degree of $f^{-1}$ and $g^{-1}$. Let $\mathrm{Aff}_k$ denote the variety of all the affine automorphisms of $\C^k$.

\begin{lemma}\label{lemma_group}
	Let $N:=\{\beta \in \mathrm{Aff}_k, \ \forall n \in \N, \ f^n \circ \beta \circ f^{-n}  \in \mathrm{Aff}_k \}$ then $N$ is an algebraic subgroup of $\mathrm{Aff}_k$, for any $q\in \N$, $f^q\circ  g^{-q} \in N$ and $\begin{cases}
		L_f:	&N \to N \\
		&\beta \mapsto f \circ  \beta \circ f^{-1}
	\end{cases}$ is a bijection. 
\end{lemma}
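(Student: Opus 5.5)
We have to prove three things about $N$: it is an algebraic subgroup of $\mathrm{Aff}_k$, it contains every $f^q\circ g^{-q}$, and $L_f$ is a bijection of $N$. The plan is to rewrite membership in $N$ as a family of polynomial equations on the coefficients of $\beta$, and to use the composition rule $f^{n}\circ(\beta\beta')\circ f^{-n}=(f^n\beta f^{-n})(f^n\beta' f^{-n})$ for the group structure.

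\emph{Algebraicity.} Fix $n$. For $\beta=(A,b)\in\mathrm{Aff}_k$, the map $f^n\circ\beta\circ f^{-n}$ is a polynomial automorphism. Its coefficients are polynomials in the entries of $(A,b)$, since $f^n$ and $f^{-n}$ are fixed polynomial maps. The condition ``$f^n\circ\beta\circ f^{-n}$ has degree $\le 1$'' says that all coefficients of monomials of degree $\ge 2$ vanish. This is a finite set of polynomial equations in $(A,b)$, so it defines a Zariski closed subset $N_n\subset\mathrm{Aff}_k$. Then $N=\bigcap_n N_n$ is Zariski closed, because an intersection of closed sets is closed (Noetherianity even shows it stabilizes after finitely many $n$).

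\emph{Subgroup.} The identity lies in $N$. If $\beta,\beta'\in N$, the composition rule shows $f^n(\beta\beta')f^{-n}$ is affine for every $n$. Similarly $f^n\beta^{-1}f^{-n}=(f^n\beta f^{-n})^{-1}$ is the inverse of an affine automorphism, hence affine. A Zariski closed subgroup of $\mathrm{Aff}_k$ is an algebraic subgroup.

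\emph{Membership of $f^q g^{-q}$.} By Lemma~\ref{lemma_degrees} (with $n=m=1$ after our reduction), for every $p\in\N$ the map $f^p\circ g^{-p}$ is affine. For $n\in\N$ we write
\[ f^n\circ(f^q g^{-q})\circ f^{-n}=(f^{n+q}g^{-(n+q)})\circ(g^{n}f^{-n})=(f^{n+q}g^{-(n+q)})\circ(f^ng^{-n})^{-1}, \]
which is a composition of affine automorphisms, hence affine. Thus $f^qg^{-q}\in N$.

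\emph{Bijectivity of $L_f$.} Well-definedness is the main point. For $\beta\in N$, the map $L_f(\beta)=f\beta f^{-1}$ is affine (take $n=1$), and $f^n L_f(\beta) f^{-n}=f^{n+1}\beta f^{-(n+1)}$ is affine, so $L_f(\beta)\in N$. Injectivity is clear, since $L_f$ is conjugation by a fixed automorphism. Surjectivity is the delicate step, because $f^{-1}\beta f$ need not obviously be affine. I would argue as follows. The sequence $N\supset L_f(N)\supset L_f^2(N)\supset\cdots$ consists of algebraic subgroups, since $L_f$ is a morphism of varieties on $N$ with image a constructible subgroup, hence closed. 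Moreover $L_f$ is an injective morphism of algebraic groups $N\to N$, so $\dim L_f(N)=\dim N$ and the identity components agree. The map $L_f$ then induces an injection on the finite component group $N/N^0$, which is therefore a bijection. Hence $L_f(N)=N$.

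If this component-counting argument is hard to make rigorous, the alternative I would try first is to show directly that $f^{-1}\beta f\in N$, i.e.\ that $f^{-1}\beta f$ is affine. This amounts to using the same definition with negative powers, possibly by symmetry via $f^{-1}$ together with the degree estimate $\max(G^+_f,G^-_f)=\log^+\|z\|+O(1)$. That approach, however, requires knowing that $\beta$ preserves the Green functions, which elements of $N$ need not do, so the algebraic-group argument above is the one I would rely on.
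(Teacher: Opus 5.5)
Your proof is correct, but it differs from the paper's in two of its steps.

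\textbf{Membership of $f^q\circ g^{-q}$ in $N$.} The paper redoes the growth argument of Lemma~\ref{lemma_degrees}. It checks that $f^n\circ f^q\circ g^{-q}\circ f^{-n}$ preserves $G_f^+$ and satisfies $G_f^-\circ(\cdot)\le\log^+\|z\|+O(1)$, then concludes via $\max(G_f^+,G_f^-)=\log^+\|z\|+O(1)$ that this map has degree $1$. Your telescoping identity $f^n(f^qg^{-q})f^{-n}=(f^{n+q}g^{-(n+q)})\circ(f^ng^{-n})^{-1}$ reduces the claim formally to Lemma~\ref{lemma_degrees} at levels $n+q$ and $n$. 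It needs no pluripotential input at all, so it is shorter and cleaner.

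\textbf{Surjectivity of $L_f$.} The paper only says that an injective morphism of a variety to itself is bijective, which is the Ax--Grothendieck theorem applied to $N$. You instead use that $L_f$ is a homomorphism of algebraic groups:
\begin{itemize}
\item the image is closed;
\item injectivity, together with dimension and irreducibility, gives $L_f(N^0)=N^0$;
\item $L_f$ then induces an injective, hence bijective, self-map of the finite group $N/N^0$.
\end{itemize}
Injectivity on $N/N^0$ holds because $L_f(\beta)\in N^0=L_f(N^0)$ forces $\beta\in N^0$ by injectivity of $L_f$. This is a self-contained special case of Ax--Grothendieck that uses only standard facts on algebraic groups. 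So your closing hedge is unnecessary: the component-counting argument is already rigorous. The paper's citation is shorter and would work even without the group structure.

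You also spell out the well-definedness of $L_f$ ($f^nL_f(\beta)f^{-n}=f^{n+1}\beta f^{-(n+1)}$) and the Zariski-closedness of each $N_n$, which the paper leaves implicit.
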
  
\begin{proof}

By intersection, \(N\) is an algebraic group. To show that 
\(f^{q}\circ g^{-q} \in N\), observe that, by Lemma~\ref{lemma_degrees}, 
\(f^{q}\circ g^{-q}\) is affine and preserves \(G_f^+\):
\[
G_f^+ \circ f^n \circ f^{q}\circ g^{-q} \circ f^{-n} = G_f^+,
\]
and similarly for \(G_f^-\):
\begin{align*}
	G_f^- \circ f^n \circ f^{q}\circ g^{-q} \circ f^{-n} 
	&= \delta^{-n-q} G_f^- \circ g^{-q} \circ f^{-n} \\
	&\leq \delta^{-n-q} \log^+\| g^{-q} \circ f^{-n}(z)\| + O(1) \\
	&\leq \log^+\|z\| + O(1),
\end{align*}
since \(g^{-q} \circ f^{-n}\) is a polynomial map of degree \(\delta^{n+q}\).  
As in the proof of Lemma~\ref{lemma_degrees}, we deduce that
\[
\log^+\| f^n \circ f^{q}\circ g^{-q} \circ f^{-n}(z)\|
\leq \log^+\|z\| + O(1),
\]
so \(f^n \circ f^{q}\circ g^{-q} \circ f^{-n}\) is an affine automorphism. Finally, the endomorphism
\[
\begin{cases}
	N \to N \\
	\beta \mapsto f \circ \beta \circ f^{-1}
\end{cases}
\]
is both injective and a morphism of varieties, hence it is a bijection.
\end{proof}
Assume now that \(N\) is finite. Then there exist \(q\neq q'\) such that 
\(f^{q}\circ g^{-q} = f^{q'}\circ g^{-q'}\), and therefore \(f\) and \(g\) share an iterate.  
This proves Theorem~\ref{tm_main}.  
Our idea is to argue by contradiction and using \(N\), to produce a fibration invariant by $f$ on which $f$ will have both topological entropy $< \log d^s$ (Lemma~\ref{lemma_entropylower}) and equal to $\log d^s$ (Lemma~\ref{lemma_entropyequal}).

Let $\mathrm{Aff}^1_k$ denote the subgroup of $\mathrm{Aff}_k$ of affine automorphism whose linear part is in $\mathrm{Sl}_k(\C)$. We consider the subgroup $N_1$ of $N$ defined by $N_1= N \cap \mathrm{Aff}^1_k$.

\begin{proposition}\label{prop_invariance}
	The set $N_1$ is finite. 
\end{proposition}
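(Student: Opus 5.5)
We argue by contradiction. Suppose $N_1$ is infinite. By Lemma~\ref{lemma_group}, $N$ and hence $N_1=N\cap \mathrm{Aff}^1_k$ are algebraic groups. So the identity component $H:=N_1^0$ is a connected algebraic subgroup of $\mathrm{Aff}^1_k$ of positive dimension. Conjugation by $f$ preserves determinants of linear parts, so $L_f$ maps $N_1$ to itself. Being a bijective morphism of varieties, $L_f$ permutes the finitely many irreducible components of $N_1$. It fixes the one containing the identity, so $f\circ H\circ f^{-1}=H$.

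\textbf{Step 1: an $f$-invariant fibration.} The relation $f(\beta(z))=(f\beta f^{-1})(f(z))$ shows that $f$ maps $H$-orbits onto $H$-orbits. Let $r\ge 1$ be the generic orbit dimension. Orbits of $H$ are Zariski-locally closed and smooth. Since $H\subset \mathrm{Aff}^1_k$ has positive dimension and consists of affine maps, a generic orbit is a positive-dimensional affine-algebraic subvariety (for a unipotent part, an affine subspace translate). By Rosenlicht's theorem there is an $H$-invariant Zariski open set $U$ with a geometric quotient $\pi:U\to Y$, $\dim Y=k-r$. Iterating $f$ if needed, $f$ induces a map $\bar f:Y\dashrightarrow Y$ with $\pi\circ f=\bar f\circ\pi$. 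We then obtain the upper bound, which is the content of the announced Lemma~\ref{lemma_entropylower}. We apply a relative (Dinh--Nguyen--Truong type) entropy bound $h(f)\le h(\bar f)+h(f|\pi)$. We estimate the dynamical degrees of $\bar f$ and the relative dynamical degrees along the fibers using $d^s=\delta^{k-s}$ and the fact that $f$ maps fibers of dimension $r$ to fibers. The goal is to show that $f$ restricted to the invariant fibration has entropy strictly less than $\log d^s$: the $s$-th dynamical degree $d^s$ cannot be carried entirely by a product of the base and fiber degrees, because $\dim I^\pm_f$ force the top degree to sit in complementary directions.

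\textbf{Step 2: $H$ preserves the Green current.} Next we show that $G^+_f\circ\beta=G^+_f$ for $\beta\in H$. Write $G^+_f\circ\beta=\lim d^{-n}\log^+\|\alpha_n f^n\|$ with $\alpha_n=f^n\beta f^{-n}\in H$. We need $\log\|\alpha_n\|=o(d^n)$ uniformly on compact subsets of $H$. For this we use the determinant-one normalization and the algebraicity of $L_f$: the iterates $L_f^n$ have coefficients of at most polynomial-in-$d^n$ logarithmic size, so the bound holds. Hence $H$ preserves $T^+_f$ and $T^+_{f,s}$. We then average $T^+_{f,s}$ along the fibers; equivalently, we disintegrate it with respect to $\pi$. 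By the extremality of $T^+_{f,s}$ among positive closed currents supported on $K^+_f$ (\cite{superpotentiels}), $T^+_{f,s}$ must be carried in a way compatible with the fibration: it is either a pull-back from $Y$ or concentrated on fibers. Its self-intersection with $T^-_{f,k-s}$ is then $\mu_f$, which is $f$-invariant of entropy $s\log d$, and it lives on the fibration. This gives the equality of Lemma~\ref{lemma_entropyequal}, contradicting Step 1.

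\textbf{Main obstacle.} The delicate point is Step 2. We must control the growth of $\alpha_n=L_f^n(\beta)$ so that the Green function passes to the limit, and then convert extremality into the statement that $\mu_f$ (with full entropy $s\log d$) is seen by the fibration. I would first try to prove that $L_f$ restricted to $H$ is an algebraic group automorphism. This uses $H$ connected and $L_f(\mathrm{id})=\mathrm{id}$, and that $L_f$ is a bijective morphism of algebraic groups. Since such automorphisms form a group acting on a finite-dimensional Lie algebra, $\|\alpha_n\|$ grows at most exponentially with some rate $\lambda$. Dividing by $d^n$ then kills $\log\|\alpha_n\|=O(n)$, which is the required estimate.
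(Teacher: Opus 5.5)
There is a genuine gap, and it is in Step~2. Your estimate $\log\|\alpha_n\|=O(n)$ assumes that iterates of an algebraic automorphism of $H$ have at most exponentially growing norm. This is false for tori. An automorphism of $(\C^*)^r$ is $t\mapsto t^{A}$ with $A\in GL_r(\Z)$. Since $L_f^n(\exp X)=\exp\bigl((\mathrm{d}L_f)^nX\bigr)$, the exponential growth happens in the Lie algebra. So it is $\log\|\alpha_n\|$, not $\|\alpha_n\|$, that grows like $\lambda(A)^n$; only for unipotent $H$ is $\exp$ polynomial. The only a priori bound, from $\deg L_f\le d$, is $\log\|\alpha_n\|=O(d^n)$, which is exactly borderline.

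In fact, take points $z^0,\dots,z^k\in\supp(\mu_f)$ spanning $\C^k$. Using Lemma~\ref{lemma_constant} and $\max(G^+_f,G^-_f)=\log^+\|z\|+O(1)$ at these points, one gets $\log^+\|\alpha_n\|_{\mathrm{Aff}}=\max\bigl(d^n\Phi^+(\beta),\delta^{-n}\Phi^-(\beta)\bigr)+O(1)$. So the $o(d^n)$ you need is equivalent to $\Phi^+(\beta)=0$, i.e.\ to $\beta^*T^+_f=T^+_f$. Your reasoning is symmetric under $f\leftrightarrow f^{-1}$, so it would also give $\beta^*T^-_f=T^-_f$, and Lemma~\ref{lemma_affine} would then end the proof at once, with no entropy at all. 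But under your contradiction hypothesis, Lemma~\ref{lem_proper} says $\max(\Phi^+,\Phi^-)=\log^+\|\beta\|_{\mathrm{Aff}}+O(1)$ is unbounded on $H$. So Step~2 is equivalent to the proposition itself; compare the discussion opening Section~\ref{sec5}. Your subsequent dichotomy (\emph{pull-back from $Y$ or concentrated on fibers}) does not follow from extremality either.

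Step~1 also aims at the wrong inequality. The map $f$ itself has entropy $s\log d$, and a fibration alone never caps this. By the product formula $d_s(f)=\max_{i+j=s}d_i(\bar f)\,d_j(f|\pi)$, entropy can split between base and fibers, as for $F=(f,f)$ and $\pi_1$ in Example~\ref{example_notmeasure}. Your $H$-orbit fibration is the right object (it is the paper's $\Pi$). What you must prove is $h(\bar f)<s\log d$ for the \emph{base} map. This follows from $\dim Y<k$, $d_1(\bar f)\le d$, $d_1(\bar f^{-1})\le\delta$, log-concavity and $d^s=\delta^{k-s}$.

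The contradiction then needs $h(\bar f)\ge s\log d$, i.e.\ zero fiber entropy on $\supp(\mu_f)$. The missing idea is the uniform bound $\#\bigl(\mathrm{Orb}(z)\cap\supp(\mu_f)\bigr)\le M$, which the paper obtains without any growth estimate:
\begin{enumerate}
\item Pulling back by $f^n$ and comparing $d^{-(s+1)n}$ with $\delta^{(k-s-1)n}$ gives $\beta^*T_a\wedge T^+_{f,s}=0$.
\item A Cauchy--Schwarz/Stokes argument and Dinh--Sibony rigidity then show that $G^+_f\circ\beta$ equals a constant $\Phi^+(\beta)$ on $\supp T^+_{f,s}$.
\item Hence if $\beta(z)\in\supp(\mu_f)$ for some $z\in\supp(\mu_f)$, then $\Phi^\pm(\beta)=0$. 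So $\beta^*T^\pm_f=T^\pm_f$, and $\beta$ lies in the finite group of Lemma~\ref{lemma_affine}.
\end{enumerate}
Bowen's inequality for the finite-to-one factor $\Pi|_{\supp(\mu_f)}$ then yields $h(\bar f)\ge s\log d$, which gives the contradiction.
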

Since $N_1$ is an affine variety, we argue by contradiction and assume that $N_1$ has positive dimension in $\mathrm{Aff}_k$. From this assumption, we will deduce Theorem~\ref{tm_main}. Note that we introduce $N_1$ precisely because it is affine, whereas $N$ may not be; later, we will show that $N$ is in turn finite.\\

In what follows, for \(a\geq 0\), we set
\begin{equation}
	\label{eq_Ga_Ta}
	G_a:= \max(G_f^+, a) \ \mathrm{and} \ T_a:= \dd^c G_a
\end{equation}
so that \(G_a\) is psh and \(T_a\) is a positive closed current of mass \(1\) on \(\mathbb{C}^k\).
\begin{lemma}\label{lemma_harmo}
	Take \(\beta\in N\) and \(a\geq 0\). Then \((\beta^{*}T_a)\wedge T^{+}_{f,s}=0\).
\end{lemma}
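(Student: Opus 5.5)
The plan is to exploit the fact that $\beta$ is conjugated to affine maps by all iterates of $f$, together with the invariance $(f^n)^*T^+_{f,s}=d^{sn}T^+_{f,s}$. I will show that the mass of the positive closed $(s+1,s+1)$-current $S:=(\beta^*T_a)\wedge T^+_{f,s}$ tends to $0$ along a family of estimates indexed by $n$. This forces $S=0$.

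Since $G_a$ is continuous and psh, $\beta^*T_a=\dd^c(G_a\circ\beta)$ and the wedge product with $T^+_{f,s}$ is well defined, and $S$ is positive and closed. For $n\in\N$ set $\beta_n:=f^n\circ\beta\circ f^{-n}$, which is affine because $\beta\in N$; then $\beta=f^{-n}\circ\beta_n\circ f^n$. Using $G^+_f\circ f^{-n}=d^{-n}G^+_f$ we get
\[
G_a\circ f^{-n}=\max(d^{-n}G^+_f,a)=d^{-n}G_{d^na},
\]
hence $G_a\circ\beta=d^{-n}\,G_{d^na}\circ\beta_n\circ f^n$. Combining this with $T^+_{f,s}=d^{-sn}(f^n)^*T^+_{f,s}$, and with the fact that pull-back commutes with wedge products of currents with continuous potentials, gives
\[
S=d^{-(s+1)n}\,(f^n)^*\bigl(\beta_n^*T_{d^na}\wedge T^+_{f,s}\bigr).
\]
The current $\beta_n^*T_{d^na}$ has mass $1$, since $\beta_n$ is affine and hence extends to an automorphism of $\P^k$ preserving $H_\infty$, and $G_{d^na}$ is in the Lelong class. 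By Bézout-type mass estimates for wedge products of currents with continuous (Lelong-class) potentials, the inner current $R_n:=\beta_n^*T_{d^na}\wedge T^+_{f,s}$ therefore has mass at most $1$ on $\P^k$.

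It remains to control the mass of $(f^n)^*R_n$, a positive closed current of bidegree $(s+1,s+1)$. For Hénon--Sibony maps the dynamical degree in bidegree $(p,p)$ with $p>s$ is $\delta^{k-p}$. Indeed, $f^*$ acts on $(p,p)$ classes like $(f^{-1})_*$, and in bidegree $(s+1,s+1)$ the relevant degree is $\delta^{k-s-1}$. Consequently, as in \cite{superpotentiels}, one has $\|(f^n)^*R\|\le C\,\delta^{(k-s-1)n}\|R\|$, at least on a fixed compact subset of $\C^k$, with $C$ independent of $n$. Using $d^s=\delta^{k-s}$, this yields, on any fixed ball $B$,
\[
\|S\|_B\le C\,d^{-(s+1)n}\delta^{(k-s-1)n}=C\,(d\delta)^{-n}\xrightarrow[n\to\infty]{}0,
\]
so $S=0$.

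The main obstacle is the uniform mass estimate for $(f^n)^*$ on $(s+1,s+1)$-currents, which is delicate because $f^n$ is only meromorphic on $\P^k$. If the global estimate is unclear, I would instead localize. Take a ball $B$ and a large ball $B'$ containing $f^n(B)\cap\supp T^+_{f,s}$; this is possible because $\supp T^+_{f,s}\subset K^+_f$ and $f(K^+_f)=K^+_f$, with $K^+_f\cap\overline{B}$ mapped into the bounded set $K^+_f\cap V$ for a filtration region $V$. Then estimate the mass of $(f^n)^*R_n$ on $B$ by integrating against $(f^n)_*$ of a test form, using the degree $\delta^n$ of $f^{-n}$. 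As a sanity check, the case $a=0$ reduces to $(T^+_f)^{s+1}=0$.
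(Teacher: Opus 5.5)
Your proposal is correct and follows the paper's proof essentially step for step: you rewrite $(\beta^*T_a)\wedge T^+_{f,s}$ as $d^{-(s+1)n}(f^n)^*\bigl(\beta_n^*T_{d^na}\wedge T^+_{f,s}\bigr)$ with $\beta_n=f^n\circ\beta\circ f^{-n}$ affine, bound the mass of the inner current by $1$ via B\'ezout, and bound the pullback by $\delta^{(k-s-1)n}$, which gives decay like $(d\delta)^{-n}$. The pullback estimate you flag as the main obstacle in fact holds globally with constant $1$, so no localization is needed: since $f^n$ is a biholomorphism of $\C^k$, the mass of $(f^n)^*R$ equals $\int_{\C^k}R\wedge(f^{-n})^*\omega^{k-s-1}$ for the Fubini--Study form $\omega$, and $(f^{-n})^*\omega=\delta^n\dd^c v_n$ with $v_n=\delta^{-n}\log\sqrt{1+\|f^{-n}\|^2}$ in the Lelong class, so the same B\'ezout inequality applies.
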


\begin{proof}
	Take \(n\in\mathbb{N}\). In \(\mathbb{C}^k\),
	\begin{align*}
		(\beta^{*}T_a)\wedge T^{+}_{f,s}
		&= (f^{n})^{*}(f^{n})_{*}\big((\beta^{*}T_a)\wedge T^{+}_{f,s}\big) \\
		&= (f^{n})^{*}\big((f^{n})_{*}(\beta^{*}T_a)\wedge (f^{n})_{*}T^{+}_{f,s}\big) \\
		&= (f^{n})^{*}\big((f^{n})_{*}(\beta^{*}d^{-n}(f^{n})^{*}T_{d^{n}a})\wedge d^{-sn}T^{+}_{f,s}\big) \\
		&= d^{-(s+1)n}(f^{n})^{*}\big((f^{n}\circ\beta\circ f^{-n})^{*}T_{d^{n}a}\wedge T^{+}_{f,s}\big),
	\end{align*}
	where we used \(T_a = d^{-n}(f^{n})^{*}T_{d^{n}a}\) and \((f^{n})_{*}T^{+}_{f,s} = d^{-sn}T^{+}_{f,s}\). Since \(f^{n}\circ\beta\circ f^{-n}\in\mathrm{Aff}_k\), the current \((f^{n}\circ\beta\circ f^{-n})^{*}T_{d^{n}a}\) has mass \(1\). By Bézout, its wedge product with \(T^{+}_{f,s}\) has mass at most \(1\), and the pullback of a positive closed \((s+1,s+1)\)-current of mass \(1\) by \(f^{n}\) has mass at most \(\delta^{(k-s-1)n}\ll d^{-(s+1)n}\). Hence \((\beta^{*}T_a)\wedge T^{+}_{f,s}=0\).
\end{proof}

Since \((\beta^{*}T_a)\wedge T^{+}_{f,s} = \dd^{c}(G_a\circ\beta\, T^{+}_{f,s})\), the current \(G_a\circ\beta\, T^{+}_{f,s}\) is positive harmonic and supported on \(\{G_f^{+}=0\}\). If \(s=1\) (e.g. when \(k=2\)), one could conclude directly from \cite{DinhSibony_rigidity} that such a current is closed, as \(T_f^{+}\) is \emph{very rigid} in their sense. Here we must verify this and then use the rigidity of the Green current \(T^{+}_{f,s}\) from \cite{superpotentiels}.

\begin{lemma}\label{lemma_constant}
	Take $\beta\in N$, then $G^+_f \circ \beta$ is constant on  the support of  $T^+_{f,s}$.
\end{lemma}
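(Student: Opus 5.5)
We take $\beta\in N$ and $a\ge 0$ and start from Lemma~\ref{lemma_harmo}, which gives $\dd^c\bigl(G_a\circ\beta\, T^+_{f,s}\bigr)=(\beta^*T_a)\wedge T^+_{f,s}=0$. So $S_a:=(G_a\circ\beta)\,T^+_{f,s}$ is a positive $\dd^c$-closed $(s,s)$-current on $\C^k$, supported on $K^+_f=\supp T^+_{f,s}\cap\C^k$. The approach is to show first that $S_a$ is actually closed, and then to use the extremality of $T^+_{f,s}$ from \cite{superpotentiels} to conclude that $G_a\circ\beta$ is constant on $\supp T^+_{f,s}$. Since $K^+_f$ is bounded in the directions away from $I^+_f$, and $G_a\circ\beta$ is continuous and satisfies $G_a\circ\beta\le \log^+\|z\|+O(1)$, the current $S_a$ has locally finite mass and extends by zero (trivial extension) as a positive $\dd^c$-closed current of finite mass on $\P^k$, supported on $\overline{K^+_f}=K^+_f\cup I^+_f$.

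For closedness, we will compute $\d S_a=\d(G_a\circ\beta)\wedge T^+_{f,s}$ and show that it vanishes. The route we intend to take is to show that $\d(G_a\circ\beta)\wedge \d^c(G_a\circ\beta)\wedge T^+_{f,s}=0$. Indeed, $\dd^c\bigl((G_a\circ\beta)^2\bigr)=2\,\d(G_a\circ\beta)\wedge\d^c(G_a\circ\beta)+2\,(G_a\circ\beta)\,\dd^c(G_a\circ\beta)$. Wedging with $T^+_{f,s}$ kills the second term, because $G_a\circ\beta$ is continuous and Lemma~\ref{lemma_harmo} applies. The first term is then a positive measure-valued quantity, and we will show it is zero by the same scaling trick as in Lemma~\ref{lemma_harmo}, applied to the function $G_a^2$. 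Conjugating by $f^n$ and using $G_a\circ f^{-n}\circ\beta_n=d^{-n}G_{d^na}\circ\beta_n$ with $\beta_n:=f^n\circ\beta\circ f^{-n}$ affine, the mass on a compact set is bounded by $d^{-(s+1)n}\delta^{(k-s-1)n}$ times a quantity of polynomial size in $d^n a$. The quadratic factor $d^{-2n}$ has to be balanced against the normalization of $(G_{d^na})^2$, which is of order $\log^2\|z\|$; because of this we localize to a fixed compact set and use Chern--Levine--Nirenberg estimates there. Once the positive $(k,k)$-measure $\d u\wedge\d^c u\wedge T^+_{f,s}$ vanishes (here $u:=G_a\circ\beta$), Cauchy--Schwarz for positive currents gives $\d u\wedge T^+_{f,s}=0$. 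Hence $S_a$ is closed.

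Now $S_a$ is a positive closed current on $\C^k$ of bounded mass supported on $K^+_f$. By \cite[Theorem 5.5.4]{superpotentiels}, as used in Lemma~\ref{lem_functions_to_currents}, it equals $c_a T^+_{f,s}$. Thus $(G_a\circ\beta-c_a)\,T^+_{f,s}=0$, and by continuity $G_a\circ\beta\equiv c_a$ on $\supp T^+_{f,s}\cap\C^k$. Taking $a=0$ gives the claim. If the extendability across $I^+_f$ causes trouble, an alternative is to use $a$ large, so that $G_a\circ\beta$ is bounded below by $a$ and the current is comparable to $T^+_{f,s}$; one then recovers $G^+_f\circ\beta$ as $\max(G^+_f\circ\beta,a)$ on the region where it exceeds $a$, and lets $a$ vary.

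The main obstacle is the closedness step, namely the vanishing of the gradient term $\d u\wedge\d^c u\wedge T^+_{f,s}$. The mass bookkeeping in the scaling argument for a quadratic quantity, on the noncompact space $\C^k$ and near $I^+_f$, is delicate. If it fails, we would fall back on the general fact that a positive $\dd^c$-closed current of the form $u\,T$, with $u$ continuous psh and $\dd^c u\wedge T=0$, satisfies $\d u\wedge\d^c u\wedge T=\tfrac12\dd^c(u^2)\wedge T$. We would then show that this $\dd^c$-exact positive measure has zero total mass by integrating against cutoffs and using the growth control on $G_a\circ\beta$.
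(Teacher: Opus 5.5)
Your overall structure matches the paper's: Lemma~\ref{lemma_harmo}, then closedness via $\d u\wedge\d^c u\wedge T^+_{f,s}=0$ and Cauchy--Schwarz, then \cite[Theorem 5.5.4]{superpotentiels}. The gap is that the closedness step fails for the function you chose.

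\textbf{Where it breaks.} With $u=G_a\circ\beta$ (or $a=0$), $u$ is a priori unbounded on $\supp T^+_{f,s}$. Indeed $K^+_f$ accumulates on $I^+_f$, and there you only know $u\le\log^+\|z\|+O(1)$. The measures $T^+_{f,s}\wedge\tilde\omega^{k-s-1}\wedge\dd^c\log\max(\|z\|,r)$ have bounded mass and, as $r\to\infty$, concentrate on $\{\|z\|=r\}\cap\supp T^+_{f,s}$, that is, near $I^+_f$. So your cutoff fallback only gives $|\langle u^2\,T^+_{f,s}\wedge\tilde\omega^{k-s-1},\dd^c\phi_R\rangle|\le C(\log R)^2/\log R$, which does not tend to $0$.

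The scaling trick does not rescue this. The bound $\delta^{(k-s-1)n}$ on the mass of $(f^n)^*$ is cohomological: it needs a positive closed current of finite mass on $\P^k$. But $\d u\wedge\d^c u\wedge T^+_{f,s}$ is not closed, and $\dd^c(u^2)\wedge T^+_{f,s}$ is not positive, with $u^2$ growing like $\log^2\|z\|$. Localizing to a compact set discards exactly the cohomological control the trick relies on. For the same reason, the bounded-mass hypothesis you need to apply \cite[Theorem 5.5.4]{superpotentiels} to $S_a$ is left unchecked.

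\textbf{The missing idea.} Truncate from above rather than from below. Put $H_a:=\min(G^+_f\circ\beta,a)=G_0\circ\beta+a-G_a\circ\beta$.
\begin{itemize}
\item Subtracting Lemma~\ref{lemma_harmo} at levels $0$ and $a$ gives $\dd^c H_a\wedge T^+_{f,s}=0$, and now $0\le H_a\le a$.
\item Hence $\d H_a\wedge\d^c H_a\wedge T^+_{f,s}=\tfrac12\dd^c H_a^2\wedge T^+_{f,s}$ is positive, and the cutoff $\phi_R$ with B\'ezout bounds its mass on $B(0,R)$ by $O(a^2/\log R)$. So it vanishes, and Cauchy--Schwarz makes $H_aT^+_{f,s}$ closed.
\item This current is positive, of mass at most $a$, and supported on $K^+_f$. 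It is therefore $c_aT^+_{f,s}$, so $H_a$ is constant on $\supp T^+_{f,s}$ for every $a>0$.
\item If $G^+_f\circ\beta$ took two values $t_1<t_2$ there, then $H_a$ with $a\ge t_2$ would not be constant.
\end{itemize}
Your suggested alternative of taking $a$ large in $\max(\cdot,a)$ does not help, because it bounds $u$ from below, not from above.
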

\begin{proof} Assume it is not the case. In particular, we can take $a>0$ such that $H_a:= \min (G^+_f \circ \beta, a) =  G_0 \circ \beta +a-G_a \circ \beta  $ is non constant on  the support of  $T^+_{f,s}$. It is a non negative function and, by difference, Lemma~\ref{lemma_harmo} gives $\dd^c H_a \wedge T^+_{f,s}=0$. To show that    $ H_a \circ \beta  T^+_{f,s}$ is closed, it suffices to show that for any smooth $1$-form $\theta$, compactly supported in $\C^k$ and any Kähler form $\tilde{\omega}$, one has
	\begin{equation} \label{eq_closed}
		\langle   \d H_a \circ \beta \wedge T^+_{f,s}, \theta \tilde{\omega}^{k-s-1}\rangle =0, 
	\end{equation} 
	where 	$\langle   , \rangle$ denotes the usual duality pairing for currents. 	
	By Cauchy-Schwarz inequality:
	\begin{align*} \
		\left| \langle   \d H_a \circ \beta \wedge T^+_{f,s}, \theta \tilde{\omega}^{k-s-1}\rangle \right|^2\leq \\
		\left| \langle    \d H_a \wedge \d^c H_a  \wedge T^+_{f,s},  \tilde{\omega}^{k-s-1}\rangle \right|\left| \langle  i \theta \wedge \bar{\theta} \wedge T^+_{f,s}, \tilde{\omega}^{k-s-1}\rangle \right|  . 
	\end{align*} 	
	The second term of the right-hand side is bounded and we show that the first term vanishes. For that, observe that 
	$\d H_a \wedge \d^cH_a = 1/2 \dd^c H_a^2 - H_a\dd^c H_a $ and since 
	$\dd^c H_a \wedge T^+_{f,s}=0$, we deduce that $\d H_a \wedge \d^cH_a \wedge T^+_{f,s} \wedge   \tilde{\omega}^{k-s-1}= \dd^c H_a^2  \wedge T^+_{f,s} \wedge   \tilde{\omega}^{k-s-1}$ is a non negative measure and
	\[  \left| \langle    \d H_a \wedge \d^c H_a  \wedge T^+_{f,s},  \tilde{\omega}^{k-s-1}\rangle \right| = \left| \langle    \dd^c H_a^2  \wedge T^+_{f,s},  \tilde{\omega}^{k-s-1}\rangle \right|.\]
	It remains to show that the mass of $\dd^c H_a^2  \wedge T^+_{f,s} \wedge   \tilde{\omega}^{k-s-1}$ is equal to  $0$ on $\C^k$. For that take $R\gg 1$ and consider \[\phi_R= \frac{\log \max (\|z\|, R^2) - \log \max (\|z\|, R)}{\log R}\]
	so $ \phi_R$ is equal to $1$ inside $B(0,R)$ and equal to $0$ outside $B(0,R^2)$. By Stokes formula:
	\[ \left| \langle    \dd^c H_a^2  \wedge T^+_{f,s}\wedge   \tilde{\omega}^{k-s-1}, \phi_R \rangle\right|=  \left| \langle   H_a^2  T^+_{f,s}\wedge   \tilde{\omega}^{k-s-1}, \dd^c \phi_R \rangle \right|.\]
	Now $\dd^c \phi_R$ can be written as $T_{1,R} - T_{2,R}$ where $T_{i,R}$ are positive closed currents of mass $1/\log R$. By Bézout, since $H_a^2 \leq a^2$, we deduce   
	\[ \left| \langle    \dd^c H_a^2  \wedge T^+_{f,s}\wedge   \tilde{\omega}^{k-s-1}, \phi_R \rangle\right|\leq   \frac{2 a^2}{\log R} ,\]
	so $\dd^c H_a^2  \wedge T^+_{f,s} \wedge   \tilde{\omega}^{k-s-1}$ is zero on $\C^k$ by letting $R \to \infty$ and \eqref{eq_closed} is proved. 
	
	But now, \cite[Theorem 5.5.4]{superpotentiels} states that a positive closed current of bounded mass supported by  $\{G^+_f=0\}$ is proportional to $T^+_{f,s}$. This implies that  $H_a $ is constant on  the support of  $T^+_{f,s}$, a contradiction. 
\end{proof}

Pick $z \in \supp(\mu_f)$ so $z$ belongs to both $\supp(T^+_{f,s})$ and $\supp(T^-_{f,k-s})$.  
Let $\Phi^+(\beta)$ (resp. $\Phi^-(\beta)$) be the constant such that
\[
G^+_f \circ \beta \, T^+_{f,s}= \Phi^+(\beta)\, T^+_{f,s}
\quad \text{(resp. } 
G^-_f \circ \beta \, T^-_{f,k-s}= \Phi^-(\beta)\, T^-_{f,k-s}\text{)}.
\]
Then
\[
\Phi^+(\beta)= G^+_f(\beta(z)), 
\qquad 
\Phi^-(\beta)= G^-_f(\beta(z)),
\]
independently of the choice of $z \in \supp(\mu_f)$. 
%


\begin{lemma}\label{lemma_otherapproch} Let $\beta \in N$ be such that $	\Phi^+(\beta)=0$, then $\beta^* T^+_f= T^+_f$. Consequently, there exists an integer $M\geq 1$ such that for any $z\in \supp(\mu_f)$, we have $\#\{\beta \in N, \  \beta(z)  \in \supp(\mu_f)\} \leq M$. 
\end{lemma}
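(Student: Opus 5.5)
The first claim amounts to showing $G^+_f\circ\beta=G^+_f$. I would get this from the rigidity theorem \cite[Theorem 5.5.4]{superpotentiels} combined with Lemma~\ref{lem_functions_to_currents}, applied to a conjugate of $f$.

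\textbf{Step 1: $\beta$ moves the support of $T^+_{f,s}$ into $K^+_f$.} Since $\Phi^+(\beta)=0$, Lemma~\ref{lemma_constant} gives $G^+_f\circ\beta=0$ on $\supp(T^+_{f,s})$. Hence $\beta(\supp T^+_{f,s})\subset\{G^+_f=0\}=K^+_f$.

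\textbf{Step 2: $\beta$ preserves $T^+_{f,s}$.} The current $\beta_*T^+_{f,s}=(\beta^{-1})^*T^+_{f,s}$ is positive and closed. Because $\beta$ is affine, it has mass $1$, and by Step 1 it is supported in $K^+_f$. By \cite[Theorem 5.5.4]{superpotentiels} it is proportional to $T^+_{f,s}$, and comparing masses gives equality. Therefore $\beta^*T^+_{f,s}=T^+_{f,s}$ as well.

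\textbf{Step 3: upgrade to $T^+_f$.} Set $g:=\beta^{-1}\circ f\circ\beta$. This is again a Hénon--Sibony map with $G^+_g=G^+_f\circ\beta$ and $I^+_g=\beta^{-1}(I^+_f)$, so $\dim I^+_g=\dim I^+_f$. Its Green current satisfies $T^+_{g,s}=\beta^*T^+_{f,s}=T^+_{f,s}$ by Step 2. The implication $(4)\Rightarrow(3)$ of Lemma~\ref{lem_functions_to_currents} then yields $T^+_g=T^+_f$, that is, $\beta^*T^+_f=T^+_f$.

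\textbf{Step 4: the minus side.} I would run Steps 1--3 for $f^{-1}$. This requires the analogues of Lemmas~\ref{lemma_harmo} and~\ref{lemma_constant} for $G^-_f$, which in turn require $f^{-n}\circ\beta\circ f^{n}$ to be affine for every $n$ when $\beta\in N$.
\begin{itemize}
\item By Lemma~\ref{lemma_group}, $L_f^{\,n}$ is a bijection of $N$, so $\beta=f^n\circ\beta'\circ f^{-n}$ for some $\beta'\in N$.
\item Hence $f^{-n}\circ\beta\circ f^n=\beta'$ is affine, and in fact lies in $N$.
\item So $N$ is also contained in the analogous group defined with $f^{-1}$ in place of $f$, and the same proofs apply with $s$ replaced by $k-s$, $d$ by $\delta$, and the roles of $T^\pm$ exchanged.
\end{itemize}
This gives the conclusion $\Phi^-(\beta)=0\Rightarrow\beta^*T^-_f=T^-_f$.

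\textbf{Step 5: the counting statement.} Let $z\in\supp(\mu_f)$ and suppose $\beta\in N$ satisfies $\beta(z)\in\supp(\mu_f)\subset K^+_f\cap K^-_f$. Then $\Phi^+(\beta)=G^+_f(\beta(z))=0$ and $\Phi^-(\beta)=G^-_f(\beta(z))=0$. By Steps 3 and 4, $\beta^*T^\pm_f=T^\pm_f$, so $\beta$ lies in the group $G$ of Lemma~\ref{lemma_affine}, which is finite. Taking $M:=\#G$ gives the bound, and $M$ is independent of $z$.

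The main obstacle is Step 4: checking that the proofs of Lemmas~\ref{lemma_harmo} and~\ref{lemma_constant} transfer to $f^{-1}$. This relies on the bijectivity of $L_f$ from Lemma~\ref{lemma_group}, and I expect the rest to be formal.
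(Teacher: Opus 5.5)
Your proof is correct and follows the paper's argument. You use the rigidity theorem \cite[Theorem 5.5.4]{superpotentiels}, then the implication $(4)\Rightarrow(3)$ of Lemma~\ref{lem_functions_to_currents} for the conjugate $\beta^{-1}\circ f\circ\beta$, then finiteness of the group of Lemma~\ref{lemma_affine}. The only cosmetic difference is that you apply rigidity to $\beta_*T^+_{f,s}$ and $f$, while the paper applies it to $T^+_{f,s}$ and the conjugate map; these are the same step.

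Your Step 4 spells out something the paper leaves implicit in its ``symmetrically'' and in its definition of $\Phi^-$. By the bijectivity of $L_f$ on $N$, the set $N$ is stable under $\beta\mapsto f^{-1}\circ\beta\circ f$, so Lemmas~\ref{lemma_harmo} and~\ref{lemma_constant} also hold for $f^{-1}$.
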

\begin{proof} Pick $\beta \in N$ with $	\Phi^+(\beta)=0$, then by Lemma~\ref{lemma_constant}, $G^+_f\circ \beta =0$ on the support of $T^+_{f,s}$. Since $G^+_f\circ \beta= G^+_{\beta^{-1} \circ f\circ \beta}$, this says that $T^+_{f,s}$ is a positive closed  $(s,s)$-current of mass $1$ supported on $\{G^+_{\beta^{-1} \circ f\circ \beta}=0\}$. As above, \cite[Theorem 5.5.4]{superpotentiels} implies that $T^+_{f,s}= T^+_{\beta^{-1} \circ f\circ \beta,s}= \beta^{*}T^+_{f,s}$. Lemma~\ref{lem_functions_to_currents} then gives $\beta^* T^+_f= T^+_f$ as claimed.
	
 By Lemma~\ref{lemma_affine}, the set of $\beta\in N$ such that $\beta^*T^+_f=T^+_f$ and  $\beta^*T^-_f=T^-_f$ is finite, let $M$ denote its cardinality.  By contradiction, assume that there exists $z\in \supp(\mu_f)$ such that $\#\{ \beta \in N, \ \beta(z)  \in \supp(\mu_f)\}>M$. For any such $\beta$, we have $\Phi^+(\beta)= G^+_f(\beta(z))=0$ and $\Phi^-(\beta)= G^-_f(\beta(z))=0$. Hence, $\beta^*T^+_f=T^+_f$ and, symmetrically,  $\beta^*T^-_f=T^-_f$, a contradiction.
\end{proof}

The group $\mathrm{Aff}_k$ embeds into the vector space $\mathrm{End}_{\mathrm{Aff}}(\C^k) \simeq \C^{k^2+k}$ of affine endomorphisms of $\C^k$. After a change of coordinates, we may assume that $0 \in \supp(\mu_f)$. Choose points $z^0,\dots,z^k \in \supp(\mu_f)$ such that $z^0=0$ and $(z^1,\dots,z^k)$ is a basis of $\C^k$ (this is possible since $\supp(\mu_f)$ is not pluripolar). Then any $\beta \in \mathrm{End}_{\mathrm{Aff}}(\C^k)$ is uniquely determined by $\beta(z^0),\dots,\beta(z^k)$, and
\[
\|\beta\|_{\mathrm{Aff}} := \max_{0\le i \le k} \|\beta(z^i)\|
\]
defines a norm on $\mathrm{End}_{\mathrm{Aff}}(\C^k)$.

\begin{lemma}\label{lem_proper}
	The function $\Phi : N_1 \to \R^+$ defined by $\Phi := \max(\Phi^+,\Phi^-)$ is proper and belongs to the Lelong class. More precisely,
	\[
	\Phi(\beta) = \log^+ \|\beta\|_{\mathrm{Aff}} + O(1).
	\]
\end{lemma}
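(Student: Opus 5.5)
The plan is to compare $\Phi(\beta)$ with the Green function $G_f=\max(G^+_f,G^-_f)=\log^+\|z\|+O(1)$ evaluated at the images $\beta(z^i)$. First I record that $\Phi^\pm(\beta)=G^\pm_f(\beta(z))$ holds for every $z\in\supp(\mu_f)$, independently of $z$. This was shown just before Lemma~\ref{lemma_otherapproch}, and the symmetric statement for $G^-_f$ and $T^-_{f,k-s}$ follows from Lemma~\ref{lemma_constant} applied to $f^{-1}$, because $N$ is also stable under conjugation by $f^{-1}$, $L_f$ being a bijection. In particular, for each $i=0,\dots,k$,
\[
\Phi(\beta)=\max\bigl(G^+_f(\beta(z^i)),G^-_f(\beta(z^i))\bigr)=G_f(\beta(z^i))=\log^+\|\beta(z^i)\|+O(1),
\]
where the $O(1)$ is uniform in $\beta$, since it comes only from the global estimate $G_f=\log^+\|z\|+O(1)$ of \cite[Theorem 8.4]{DinhSibony_rigidity}. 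Taking the maximum over $i$ then gives $\Phi(\beta)=\max_i\log^+\|\beta(z^i)\|+O(1)=\log^+\|\beta\|_{\mathrm{Aff}}+O(1)$, which is the claimed estimate. Properness follows immediately: the sublevel set $\{\Phi\le C\}$ is contained in a bounded subset of $\mathrm{End}_{\mathrm{Aff}}(\C^k)$, and it is closed in $N_1$ because $\Phi$ is continuous ($G^\pm_f$ are continuous). Moreover $N_1$ is Zariski closed in $\mathrm{Aff}^1_k$, which is itself closed in $\mathrm{End}_{\mathrm{Aff}}(\C^k)$ since the determinant is fixed to $1$. Hence bounded closed subsets of $N_1$ are compact.

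For membership in the Lelong class, I view $N_1$ as an affine algebraic subvariety of $\C^{k^2+k}\simeq\mathrm{End}_{\mathrm{Aff}}(\C^k)$ and need $\Phi$ to be psh on it with logarithmic growth. Fixing $i=0$, we have $\Phi(\beta)=G_f(\beta(z^0))$, and the map $\beta\mapsto\beta(z^0)$ is the restriction of a linear map. Since $G^\pm_f$ are psh and continuous, their composition with a holomorphic map is psh, and a maximum of psh functions is psh. So $\Phi$ is psh on $N_1$, in the sense of restricting to a psh function on the regular part, or via local holomorphic parametrizations. The growth bound $\Phi\le\log^+\|\beta\|+O(1)$ comes from the estimate above, and all norms on $\mathrm{End}_{\mathrm{Aff}}(\C^k)$ are equivalent.

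The main obstacle is the uniformity of the $O(1)$ together with the independence of $\Phi^-$ from the base point. The latter requires checking that Lemma~\ref{lemma_constant} applies to $f^{-1}$ with $T^-_{f,k-s}$, for which I would use the symmetry of all the preceding arguments and the fact that $\beta\in N$ implies $f^{-n}\circ\beta\circ f^{n}\in N$. Once this holds, every estimate reduces to the single global comparison of $G_f$ with $\log^+\|z\|$, and no further argument is needed.
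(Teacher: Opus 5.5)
Your proposal is correct and follows essentially the same route as the paper: evaluate $\Phi(\beta)=G_f(\beta(z^i))$ at each $z^i\in\supp(\mu_f)$, apply the global estimate $G_f=\log^+\|z\|+O(1)$, and take the maximum over $i$. Your extra details, which the paper leaves implicit, are sound: properness uses that $N_1$ is closed in $\mathrm{End}_{\mathrm{Aff}}(\C^k)$, plurisubharmonicity comes from $\beta\mapsto G_f(\beta(z^0))$, and the $\Phi^-$ version of Lemma~\ref{lemma_constant} follows from $f^{-n}\circ\beta\circ f^{n}\in N$ via $L_f^{-1}$.
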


\begin{proof}
	For each $i$, we have
	\[
	\log^+ \|\beta(z^i)\| = \max\bigl(G^+_f(\beta(z^i)),\, G^-_f(\beta(z^i))\bigr) + O(1)	= \Phi(\beta) + O(1).
	\]
	Thus, for $\beta \in N_1$,
	\[
	\log^+ \|\beta\|_{\mathrm{Aff}}
	= \max_{0\le i \le k} \max\bigl(G^+_f(\beta(z^i)),\, G^-_f(\beta(z^i))\bigr) + O(1)
	= \Phi(\beta) + O(1).
	\]
	\end{proof}
For $z \in \C^k$, we define 
\[\mathrm{Orb}(z):= \{ \beta(z), \ \beta\in N_1\}\]
the class of $z$ under the action of $N_1$. From the definition of $N_1$, we deduce $f(\mathrm{Orb}(z))= \mathrm{Orb}(f(z))$.

\begin{corollary}
For $z\in \supp(\mu_f)$, the set $\mathrm{Orb}(z)$ is an affine irreducible subvariety of $\C^k$ of positive dimension. Furthermore, there exist a constant $M>0$ such that for every  $z\in \supp(\mu_f)$, we have $\dim(\mathrm{Orb}(z))$ is independent of $z$ with $\dim(\mathrm{Orb}(z))<k$ and both the degree of $\mathrm{Orb}(z)$  and $\#\mathrm{Orb}(z) \cap  \supp(\mu_f)$ are uniformly bounded from above by $M$.
\end{corollary}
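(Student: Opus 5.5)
The plan is to read $\mathrm{Orb}(z)$ as the image of the orbit map $\mathrm{ev}_z : N_1 \to \C^k$, $\beta\mapsto \beta(z)$, and to use the standing hypothesis that $N_1$ has positive dimension. Throughout I take $N_1$ connected; otherwise I pass to its identity component $N_1^0$, which has finite index, is still normalized by $f$ (since $L_f$ is an algebraic bijection of $N$ preserving $N_1$ and permuting its components, it fixes $N_1^0$ setwise), and gives $f(\mathrm{Orb}^0(z))=\mathrm{Orb}^0(f(z))$.

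\textbf{Irreducibility.} $N_1$ is an affine algebraic group acting algebraically on $\C^k$. By Chevalley/Borel, the orbit $\mathrm{Orb}(z)$ is a locally closed, smooth, irreducible subvariety. I first show it is closed, using the properness in Lemma~\ref{lem_proper}. If $\beta_j(z)\to w$ with $z\in\supp(\mu_f)$, then the $G^\pm_f(\beta_j(z))$ are bounded. By Lemma~\ref{lemma_constant}, $\Phi^\pm(\beta_j)=G^\pm_f(\beta_j(z))$, so $\Phi(\beta_j)$ is bounded. Since $\Phi$ is proper, $\beta_j$ subconverges in $N_1$, which is closed in $\mathrm{Aff}_k$, and so $w\in\mathrm{Orb}(z)$. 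Thus $\mathrm{Orb}(z)$ is an irreducible affine subvariety.

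\textbf{Positive dimension.} The stabilizer of $z$ in $N_1$ maps $z$ into $\supp(\mu_f)$, so by Lemma~\ref{lemma_otherapproch} it has at most $M$ elements. Hence $\dim \mathrm{Orb}(z)=\dim N_1>0$. The same argument gives that $\dim\mathrm{Orb}(z)=\dim N_1$ is independent of $z$.

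\textbf{Bounded intersection with the support.} Each point of $\mathrm{Orb}(z)\cap\supp(\mu_f)$ equals $\beta(z)$ for some $\beta$ with $\beta(z)\in\supp(\mu_f)$. Lemma~\ref{lemma_otherapproch} bounds the number of such $\beta$ by $M$, so $\#\,\mathrm{Orb}(z)\cap\supp(\mu_f)\le M$.

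\textbf{Dimension $<k$.} If $\dim\mathrm{Orb}(z)=k$, then $\mathrm{Orb}(z)=\C^k$ by closedness and irreducibility, so it contains all of the infinite set $\supp(\mu_f)$. This contradicts the bound just proved.

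\textbf{Bounded degree.} This is the main obstacle. The orbit map has degree at most one in the entries of $\beta$, so $\mathrm{Orb}(z)$ is the image of the fixed variety $N_1$ under a linear projection $\mathrm{End}_{\mathrm{Aff}}(\C^k)\to\C^k$, namely $\beta\mapsto\beta(z)$. A generic linear subspace of complementary dimension pulls back to a linear section of the closure of $N_1$ in $\mathrm{End}_{\mathrm{Aff}}$. Its number of points is at most $\deg \overline{N_1}$, divided by the size of the finite generic fiber. Hence $\deg\mathrm{Orb}(z)\le\deg\overline{N_1}$, a constant independent of $z$.

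The subtle point is ensuring that no component of the preimage escapes to infinity and that the map has finite fibers. Closedness of the orbit, the bounded stabilizer from Lemma~\ref{lemma_otherapproch}, and properness take care of this, since they make the orbit map finite onto its image. Enlarging $M$ to dominate $\deg\overline{N_1}$ completes the plan.
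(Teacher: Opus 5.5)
Your proof is correct. For closedness (properness of $\beta\mapsto\beta(z)$ via Lemma~\ref{lemma_constant} and Lemma~\ref{lem_proper}), for the dimension count $\dim\mathrm{Orb}(z)=\dim N_1$ from finite stabilizers, and for the bounds $0<\dim\mathrm{Orb}(z)<k$ and $\#\,\mathrm{Orb}(z)\cap\supp(\mu_f)\le M$ from Lemma~\ref{lemma_otherapproch}, it is the paper's argument spelled out in more detail.

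It differs from the paper in two places.

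\emph{Irreducibility.} The paper's claim tacitly needs $N_1$ to be connected. Writing $N_1=\bigcup_i\gamma_iN_1^0$, the set $\mathrm{Orb}(z)$ is the union of the $N_1^0$-orbits of the points $\gamma_i z$, and these may be disjoint. Your passage to $N_1^0$ is therefore the right repair: it is still normalized by $f$, has the same dimension and satisfies the same bounds. It costs nothing in the later contradiction argument.

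\emph{Degree.} The paper realizes $\mathrm{Orb}(z)=\Gamma\cap\pi^{-1}(z)$ as a fiber of one fixed algebraic set and invokes uniform boundedness of fiber degrees. You instead realize it as the image of the fixed variety $N_1$ under the linear map $\mathrm{ev}_z$, whose fibers are cosets of the finite stabilizer. You then apply B\'ezout to $N_1\cap\mathrm{ev}_z^{-1}(\Lambda)$ for a generic affine subspace $\Lambda\subset\C^k$ of codimension $\dim N_1$. Your route gives the explicit bound $\deg\overline{N_1}$. It also sidesteps the question of whether $\Gamma$, which is a priori only constructible, has the right fibers. The paper's formulation is shorter.

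One minor point: the subconvergence step needs $N_1$ to be closed in $\mathrm{End}_{\mathrm{Aff}}(\C^k)$, not merely in $\mathrm{Aff}_k$. This holds because the determinant-one condition passes to limits. It is exactly why $N_1$ is used instead of $N$, and it is already encoded in the properness statement of Lemma~\ref{lem_proper}.
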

\begin{proof}Pick $z \in \supp(\mu_f)$. The first assertion follows directly from the fact that the map $\beta \mapsto \beta(z)$ is proper in $N_1$, by Lemma~\ref{lem_proper}. By Lemma~\ref{lemma_otherapproch}, we obtain that
	\[
	0 < \dim(\mathrm{Orb}(z)) < k,
	\]
	and that this dimension is independent of $z \in \supp(\mu_f)$ (indeed, it equals $\dim(N_1)$). The same lemma also implies that $\#\big(\mathrm{Orb}(z) \cap \supp(\mu_f)\big)$ is uniformly bounded independently of $z \in \supp(\mu_f)$.
	
	Finally, since
	\[
	\mathrm{Orb}(z) = \Gamma \cap \pi^{-1}(z),
	\]
	where $\Gamma = \{ (\beta(w), w) \in N_1 \times \C^k \; ; \; \beta \in N_1,\; w \in \C^k \}$ and $\pi$ denotes the projection onto the second coordinate, it follows that the degree of $\mathrm{Orb}(z)$ is uniformly bounded in $z$.
\end{proof}
\begin{remark} \normalfont Picking $z \in \supp(\mu_f)$ to be a fixed point of $f$ (which exists after possibly replacing $f$ by an iterate), we obtain
	\[
	f(\mathrm{Orb}(z)) = \mathrm{Orb}(z).
	\]
	In dimension $2$, this already yields a contradiction, since Hénon maps do not preserve any (affine) algebraic set of dimension $1$ (see \cite{BedfordSmillie1}). However, this property does not extend to higher dimensions, as illustrated for instance in Example~\ref{example_notmeasure}.
\end{remark}

Let $\ell$ denote the dimension of $\mathrm{Orb}(z)$ for $z \in \supp(\mu_f)$. Let $\mathcal{C}$ be the variety of algebraic cycles in $\P^k$ of dimension $\ell$ and degree at most $M$ (see, e.g. \cite[Chapter~I]{Kollar}). We define $\mathcal{Z}$ to be the Zariski closure in $\mathcal{C}$ of the set
\[
\{\overline{\mathrm{Orb}(z)} \,;\, z \in \supp(\mu_f)\}.
\]
We then obtain the following lemma, whose proof relies on the Zariski density of $\supp(\mu_f)$ and is left to the reader.
\begin{lemma}\label{lem_Z}
\begin{itemize}
	\item For any $Z\in \mathcal{Z}$ and any $\beta \in N_1$, we have $\beta Z = Z$.
	\item For a generic $z \in \C^k$, there is a unique $Z\in \mathcal{Z}$ such that $z\in Z$.  
	\item the map $\mathrm{Orb}(z)\mapsto \mathrm{Orb}(f(z))$ extends to a birational map $\mathcal{F} : \mathcal{Z} \dasharrow \mathcal{Z}$. In particular, $\dim \mathcal{Z}+ \dim N_1 = k$.
\end{itemize}
\end{lemma}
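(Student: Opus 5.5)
The plan is to treat $\mathcal{Z}$ as the family of fibres of the orbit decomposition of $N_1$ on $\C^k$, and to get each bullet from one fact: the relevant condition is Zariski closed and holds on the Zariski dense set $\supp(\mu_f)$. Recall that $\supp(\mu_f)$ is not pluripolar, hence Zariski dense in $\C^k$.

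\textbf{First bullet.} For $z\in\supp(\mu_f)$ and $\beta\in N_1$ we have $\beta\,\mathrm{Orb}(z)=\mathrm{Orb}(z)$, since $N_1$ is a group. Hence $\beta\,\overline{\mathrm{Orb}(z)}=\overline{\mathrm{Orb}(z)}$ in $\P^k$. For fixed $\beta$, the set $\{Z\in\mathcal{C} \,;\, \beta Z=Z\}$ is Zariski closed in $\mathcal{C}$, because $\beta$ acts algebraically on $\mathcal{C}$. It therefore contains the Zariski closure $\mathcal{Z}$. Intersecting over $\beta\in N_1$ gives the claim.

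\textbf{Second bullet.} Consider the incidence variety $\mathcal{I}=\{(Z,w)\in\mathcal{Z}\times\P^k \,;\, w\in Z\}$ with its two projections $p_1,p_2$.
\begin{itemize}
\item \emph{Existence.} The image $p_2(\mathcal{I})$ is closed and contains $\supp(\mu_f)$, so it is all of $\P^k$.
\item \emph{Uniqueness for generic $w$.} By the first bullet, every $Z\in\mathcal{Z}$ is a union of $N_1$-orbits. Take a Zariski open $U\subset\C^k$ on which the orbits of the algebraic group $N_1$ have the generic dimension; that dimension is $\ell$, since $U\cap\supp(\mu_f)\neq\varnothing$. For $w\in U$ and $w\in Z$, the orbit $\mathrm{Orb}(w)$ is irreducible, of dimension $\ell$, and contained in $Z$. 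Since $\dim Z=\ell$, the closure $\overline{\mathrm{Orb}(w)}$ is a component of $Z$.
\item The remaining point is that $Z$ cannot carry extra components or multiplicities for generic $Z$. The cycles $\overline{\mathrm{Orb}(z)}$, $z\in\supp(\mu_f)$, are irreducible and reduced. The set of irreducible reduced cycles is constructible and contains a Zariski dense subset of $\mathcal{Z}$, so it contains a dense open subset of the irreducible components meeting it. On that open set, $Z=\overline{\mathrm{Orb}(w)}$ for every $w\in Z\cap U$, which is uniqueness. I may first replace $\mathcal{Z}$ by the union of its components that meet the orbit cycles; this is harmless.
\end{itemize}
I expect this genericity bookkeeping to be the main obstacle: it handles degenerate cycles in the boundary of $\mathcal{Z}$, and the possibility that $\mathcal{Z}$ is reducible.

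\textbf{Third bullet.} The second bullet gives a rational map $\rho:\C^k\dasharrow\mathcal{Z}$, $w\mapsto Z_w$. It is dominant, and its generic fibres are the orbit closures, of dimension $\ell=\dim N_1$; the last equality holds because the stabilisers are finite, by Lemma~\ref{lemma_otherapproch}. Hence $\dim\mathcal{Z}=k-\dim N_1$.

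Next, $f$ conjugates $N_1$ to itself: $L_f$ maps $N_1$ onto $N_1$, since it is a bijection of $N$ preserving the determinant condition. Thus $f(\mathrm{Orb}(w))=\mathrm{Orb}(f(w))$ for every $w$. As $f$ extends to a birational map of $\P^k$, the strict transform of cycles defines a rational map on the open set of $\mathcal{Z}$ where $Z$ is not contained in the indeterminacy locus. This map sends $\overline{\mathrm{Orb}(z)}$ to $\overline{\mathrm{Orb}(f(z))}$ for $z\in\supp(\mu_f)$, and $\supp(\mu_f)$ is $f$-invariant. So it sends a dense subset of $\mathcal{Z}$ into $\mathcal{Z}$, and by closure defines $\mathcal{F}:\mathcal{Z}\dasharrow\mathcal{Z}$ with $\mathcal{F}\circ\rho=\rho\circ f$. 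The same construction with $f^{-1}$ gives a rational inverse, so $\mathcal{F}$ is birational.
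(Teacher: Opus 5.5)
Your strategy is what the paper has in mind: everything follows from Zariski density of $\supp(\mu_f)$ together with closedness of the relevant conditions on the Chow variety (the paper leaves this lemma to the reader). Your first and third bullets are fine at this level of detail. There is, however, a gap in the uniqueness part of the second bullet, and it makes the dimension count in the third bullet circular.

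What you actually prove is narrower than uniqueness in $\mathcal{Z}$. Let $\mathcal{Z}^o\subset\mathcal{Z}$ be the open set of irreducible reduced cycles. You show that for $w\in U$, any $Z\in\mathcal{Z}^o$ with $w\in Z$ equals $\overline{\mathrm{Orb}(w)}$. The lemma needs uniqueness in all of $\mathcal{Z}$, so you must also show that a generic $w$ lies on no cycle of the closed complement $B:=\mathcal{Z}\setminus\mathcal{Z}^o$. A priori, a degenerate limit such as $\overline{\mathrm{Orb}(w)}+\overline{\mathrm{Orb}(w')}$ could lie in $\mathcal{Z}$ and pass through $w$. Restricting attention to a dense open subset of $\mathcal{Z}$ does not rule this out. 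What is needed is that $p_2(p_1^{-1}(B))$ is a proper subvariety, which is a dimension count requiring $\dim\mathcal{Z}\le k-\ell$. In your write-up this inequality only appears in the third bullet, where it is deduced from $\rho$. But $\rho$ is defined via the very uniqueness in question.

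The repair uses only what you already have.
\begin{itemize}
\item Your argument shows that $p_2$ is injective on $p_1^{-1}(\mathcal{Z}^o)\cap p_2^{-1}(U)$.
\item Let $D$ be the set of orbit cycles $\overline{\mathrm{Orb}(z)}$, $z\in\supp(\mu_f)$. For each irreducible component $\mathcal{Z}_i$ of $\mathcal{Z}$, the set $D\cap\mathcal{Z}_i$ is dense in $\mathcal{Z}_i$ because $\mathcal{Z}=\overline{D}$. So your reduction to ``components meeting the orbit cycles'' is vacuous.
\item Since $\supp(\mu_f)\subset U$, the open set of $Z\in\mathcal{Z}^o\cap\mathcal{Z}_i$ meeting $U$ is nonempty. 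Over it the fibres $Z\cap U$ have dimension $\ell$, so injectivity of $p_2$ gives $\dim\mathcal{Z}_i+\ell\le k$.
\item Hence $\dim B\le k-\ell-1$, so $\dim p_2(p_1^{-1}(B))\le k-1$. Every $w\in U\setminus p_2(p_1^{-1}(B))$ then lies on exactly one $Z\in\mathcal{Z}$.
\end{itemize}
Together with the surjectivity of $p_2$, this yields $\dim\mathcal{Z}+\ell=k$ before $\rho$ is introduced, and your third bullet then goes through.

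A minor point: you use irreducibility of $\mathrm{Orb}(w)$ for generic $w$, and it is also needed for $D\subset\mathcal{Z}^o$. This holds only if $N_1$ is connected. Replacing $N_1$ by its identity component, which is preserved by $L_f$, costs nothing.
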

Consequently, if $\Pi: \C^k \dasharrow \mathcal{Z}$ denotes the projection $z\mapsto Z$ where $z\in Z$, then $\Pi$ defines a meromorphic fibration and we have the commuting diagram
\begin{equation}\label{eq_diag}
\xymatrix{
	\P^k\ar@{-->}[rr]^{f}\ar@{-->}[d]^{\Pi} & & \ar@{-->}[d]^{\Pi}\P^k\\
 	\mathcal{Z}\ar@{-->}[rr]^{\mathcal{F}} & & 	\mathcal{Z} }.
\end{equation}

\begin{lemma}\label{lemma_entropylower}
	The topological entropy of $\mathcal{F}$ satisfies $h_{\mathrm{top}}(\mathcal{F}) <  \log d^s $.
\end{lemma}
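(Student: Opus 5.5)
The plan is to bound $h_{\mathrm{top}}(\mathcal{F})$ by dynamical degrees. We use the Dinh--Sibony bound $h_{\mathrm{top}}(\mathcal{F})\le \max_{0\le j\le m}\log \lambda_j(\mathcal{F})$, where $m:=\dim\mathcal{Z}=k-\ell<k$. We also use the product (relative) formula for dynamical degrees of a dominant meromorphic map preserving a meromorphic fibration, due to Dinh--Nguyên(--Truong). Applied to the diagram \eqref{eq_diag}, it gives for each $p$:
\[
\lambda_p(f)=\max_{\max(0,p-\ell)\le j\le \min(p,m)} \lambda_j(\mathcal{F})\,\lambda_{p-j}(f|\Pi).
\]
Here $\lambda_i(f|\Pi)$ are the relative dynamical degrees, with $\lambda_0(f|\Pi)=1$. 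The dynamical degrees of $f$ are known: $\lambda_p(f)=d^p$ for $p\le s$ and $\lambda_p(f)=\delta^{k-p}$ for $p\ge s$. So $\lambda_s(f)=d^s$ is the unique strict maximum. In particular $\lambda_p(f)<d^s$ for every $p\neq s$.

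The key steps run as follows. First, taking the term with $p-j=0$ in the formula gives $\lambda_j(\mathcal{F})\le\lambda_j(f)$ for all $j\le m$. Hence $h_{\mathrm{top}}(\mathcal{F})\le \log\max_{j\le m}\lambda_j(f)$, and this is already $<\log d^s$ unless $s\le m$ and $\lambda_s(\mathcal{F})=d^s$. Second, we rule out $\lambda_s(\mathcal{F})=d^s$. We exploit the structure of the fibres: they are the closures of the $N_1$-orbits, and $N_1$ acts by affine maps conjugated by $f$ to affine maps ($L_f$ is a bijection of $N$, hence of $N_1$). So $f$ maps $\mathrm{Orb}(z)$ to $\mathrm{Orb}(f(z))$ via a map intertwining the $N_1$-actions, $f\circ\beta=L_f(\beta)\circ f$. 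Fixing a base point, $f$ restricted to the fibres is then essentially the group automorphism $L_f$ of $N_1$ transported along orbits. This is algebraic and invertible, so the fibrewise growth should be that of an automorphism of the algebraic group $N_1$. We then aim to prove $\lambda_1(f|\Pi)>1$, or more precisely that some relative degree is $>1$, using that $\lambda_{s+i}(f)\ge\lambda_s(\mathcal{F})\lambda_i(f|\Pi)$. Combined with $\lambda_s(\mathcal{F})=d^s$, this would give $\lambda_{s+i}(f)\ge d^s$ for some $i\ge 1$, contradicting strict log-concavity (the unique maximum at $p=s$).

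As an alternative route for the second step, we could work directly with the symmetric case. The formula at $p=s$ reads $d^s=\max_j \lambda_j(\mathcal{F})\lambda_{s-j}(f|\Pi)$. Applying the same reasoning to $f^{-1}$, which also preserves $\Pi$ because $L_f$ is a bijection, forces the fibration to see both $d$- and $\delta$-growth. The fibres have dimension $\ell\ge1$ and are preserved by both $f$ and $f^{-1}$. We would show that the fibrewise maps cannot all have relative degrees equal to $1$, since otherwise $\lambda_{k-m}(f)$ and $\lambda_{m}(f^{-1})$ would equal base degrees. Matching the two chains of inequalities then contradicts $d^s=\delta^{k-s}$ together with uniqueness of the maximal degree.

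I expect the main obstacle to be exactly this exclusion of $\lambda_s(\mathcal{F})=d^s$, that is, controlling the relative dynamical degrees from the algebraic-group structure of the fibres. What I would try first is this. Since each fibre meets $\supp(\mu_f)$ in at most $M$ points (Corollary above), the conditional measures of $\mu_f$ on fibres are atomic. Hence the fibre entropy vanishes, and $\mu_f$ pushes forward to a measure $\Pi_*\mu_f$ with $h_{\Pi_*\mu_f}(\mathcal{F})=h_{\mu_f}(f)=s\log d$. That route, however, is what the subsequent Lemma~\ref{lemma_entropyequal} appears to exploit. For the present strict inequality I would therefore rely on the degree formula above and on the strict unimodality of $(\lambda_p(f))_p$.
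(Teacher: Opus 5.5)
Your reduction is correct. The term with $p-j=0$ in the product formula gives $\lambda_j(\mathcal{F})\le\lambda_j(f)$ for $j\le m$, so everything comes down to excluding the case $s\le m$ and $\lambda_s(\mathcal{F})=d^s$. But that second step is never carried out.

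In your main route you plan to show that some relative degree $\lambda_i(f|\Pi)$ with $i\ge 1$ is strictly bigger than $1$, using the $N_1$-orbit structure of the fibres. No argument is given, and your own heuristic does not support it. If the fibrewise maps are transported algebraic group automorphisms of $N_1$, these are linear when $N_1$ is a vector group, say, and nothing in your reasoning prevents all relative degrees from being $1$. The alternative route has the same defect: the claim that the fibrewise maps cannot all have relative degree $1$ is asserted without proof, and the ``matching of the two chains'' is never written down.

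The missing observation is that strictness is not needed: $\lambda_\ell(f|\Pi)\ge 1$ suffices, and it is automatic.
\begin{itemize}
\item At $p=k$ the only admissible index is $j=m$. Since $f$ and $\mathcal{F}$ are birational, this gives $1=\lambda_k(f)=\lambda_m(\mathcal{F})\,\lambda_\ell(f|\Pi)=\lambda_\ell(f|\Pi)$.
\item If $s\le m$, the index $j=s$ is admissible at $p=s+\ell\le k$. Hence $\lambda_s(\mathcal{F})\le\lambda_{s+\ell}(f)=\delta^{k-s-\ell}<d^s$, because $\ell\ge 1$.
\end{itemize}
Equivalently, and this is what your alternative route should have said, apply your first step to $f^{-1}$ and $\mathcal{F}^{-1}$. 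Using $\lambda_j(\mathcal{F}^{-1})=\lambda_{m-j}(\mathcal{F})$ and $\lambda_j(f^{-1})=\lambda_{k-j}(f)$, you get $\lambda_i(\mathcal{F})\le\min(\lambda_i(f),\lambda_{i+\ell}(f))<d^s$ for all $i$, since $i$ and $i+\ell$ cannot both equal $s$. No information on the fibres is required.

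The paper argues in the same spirit but with less input. Dang's comparison theorem is used only for $\lambda_1(\mathcal{F})\le d$ and $\lambda_{m-1}(\mathcal{F})=\lambda_1(\mathcal{F}^{-1})\le\delta$. Log-concavity of $(\lambda_i(\mathcal{F}))_i$, with $\lambda_0(\mathcal{F})=\lambda_m(\mathcal{F})=1$, then yields $\lambda_i(\mathcal{F})\le\min(d^i,\delta^{m-i})$. This is $<d^s$ for every $i$ because $m<k$ and $d^s=\delta^{k-s}$.
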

\begin{proof} Let $d_p(\mathcal{F})$ denote the $p$-th dynamical degree of $\mathcal{F}$. 
	By \cite[Theorem 4]{Dang_degree} (see also \cite{Dinh_Vietanh, Dinh_VietAnh_Truong}), the first dynamical degree of $\mathcal{F}$ satisfies $d_1(\mathcal{F}) \le d$, and similarly $d_1(\mathcal{F}^{-1}) \le \delta$. Using the log-convexity of dynamical degrees (\cite{Khovanskii, Teissier}), we then obtain
	\[
	\max_{0 \le i \le \dim \mathcal{Z}} d_i(\mathcal{F}) < d^s,
	\]
	since $\dim \mathcal{Z} < k$. Gromov's entropy argument therefore implies
	\[
	h_{\mathrm{top}}(\mathcal{F})
	\le
	\log \Bigl(\max_{0 \le i \le \dim \mathcal{Z}} d_i(\mathcal{F})\Bigr)
	<
	\log d^s
	\]
	(see \cite{DS_entropy, Gromov_entropy}).  
\end{proof}

\begin{lemma}\label{lemma_entropyequal}
	The topological entropy of $\mathcal{F}$ satisfies $h_{\mathrm{top}}(\mathcal{F}) \geq  \log d^s $.
\end{lemma}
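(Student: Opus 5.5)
The idea is to transport the measure of maximal entropy $\mu_f$ to $\mathcal{Z}$ through the fibration $\Pi$, and to show that the fibres of $\Pi$ carry no entropy. The variational principle will then give $h_{\mathrm{top}}(\mathcal{F})\ge h_{\nu}(\mathcal{F})= h_{\mu_f}(f)=\log d^s$.

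\textbf{Step 1: pushing $\mu_f$ down to $\mathcal{Z}$.} The measure $\mu_f$ does not charge analytic sets. It therefore gives no mass to the indeterminacy locus of $\Pi$, nor to the proper algebraic subsets where the second assertion of Lemma~\ref{lem_Z} fails. Hence $\nu:=\Pi_*\mu_f$ is a well-defined probability measure on $\mathcal{Z}$. It lives on the set of cycles $\overline{\mathrm{Orb}(z)}$ with $z\in\supp(\mu_f)$, after discarding a null set. The commutative diagram \eqref{eq_diag} gives $\mathcal{F}_*\nu=\nu$, and $\mathcal{F}$ is a measurable isomorphism off a $\nu$-null set.

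\textbf{Step 2: the fibres are finite.} For $\nu$-a.e.\ $Z$, the conditional measure of $\mu_f$ on $\Pi^{-1}(Z)$ is supported on $Z\cap\supp(\mu_f)$. For a.e.\ $z$ in the support, the fibre through $z$ is $\overline{\mathrm{Orb}(z)}$. By the Corollary (a consequence of Lemma~\ref{lemma_otherapproch}), $\#\,\mathrm{Orb}(z)\cap\supp(\mu_f)\le M$ uniformly in $z$. A possible subtlety is the closure $\overline{\mathrm{Orb}(z)}$ in $\P^k$ versus $\mathrm{Orb}(z)$. However, $\mu_f$ has compact support in $\C^k$, and $\mathrm{Orb}(z)$ is closed in $\C^k$ because the orbit map is proper (Lemma~\ref{lem_proper}). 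So the difference is irrelevant. Consequently $\Pi:(\C^k,\mu_f,f)\to(\mathcal{Z},\nu,\mathcal{F})$ is a measure-theoretic factor map whose fibres contain at most $M$ points of the support.

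\textbf{Step 3: entropy comparison.} A factor map with uniformly finite fibres preserves measure-theoretic entropy. The cleanest route is the Abramov--Rokhlin formula: $h_{\mu_f}(f)=h_\nu(\mathcal{F})+h_{\mu_f}(f\mid\Pi^{-1}\mathcal{B}_{\mathcal{Z}})$. The relative (fibre) entropy vanishes because the fibre entropy of a finite-to-one extension with at most $M$ points per fibre is $0$. Indeed, the relative $n$-step information is bounded by $\log M$, so it contributes nothing after dividing by $n$. This gives $h_\nu(\mathcal{F})=h_{\mu_f}(f)=s\log d$, using that $\mu_f$ has maximal entropy $s\log d$.

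\textbf{Step 4: the variational principle.} The variational principle bounds $h_\nu(\mathcal{F})$ by $h_{\mathrm{top}}(\mathcal{F})$. For the meromorphic map $\mathcal{F}$, topological entropy is taken in the sense of Dinh--Sibony/Gromov on the complement of the indeterminacy orbits. It suffices to note that $\nu$ is supported on the compact set $\Pi(\supp\mu_f)$, which avoids the indeterminacy set of $\mathcal{F}$ up to a null set. On that set $\mathcal{F}$ is continuous, so the classical variational inequality for the restricted continuous system applies. Moreover, this restricted entropy is bounded by the $h_{\mathrm{top}}(\mathcal{F})$ used in Lemma~\ref{lemma_entropylower}.

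\textbf{Main obstacle.} I expect the main obstacle to be this last technical point. One needs a compact $\mathcal{F}$-invariant set of full $\nu$-measure on which $\mathcal{F}$ is continuous. A fallback is to work directly with Gromov's $(n,\varepsilon)$-separated sets and their images under $\Pi$. The plan there would be to use finiteness of the fibres to show that separated sets of $f$ on $\supp\mu_f$ project, at a loss of a factor at most $M$, to separated sets of $\mathcal{F}$.
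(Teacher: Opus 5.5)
Your argument is correct at the level of rigor of the paper, but it goes through measure-theoretic entropy, whereas the paper's proof is purely topological.

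\emph{The paper's route.} The paper restricts the diagram to the compact invariant sets $\supp(\mu_f)$ and $\mathcal{Z}_\mu=\Pi(\supp(\mu_f))$. By Lemma~\ref{lem_Z} it takes $f$, $\Pi$ and $\mathcal{F}$ to be well defined and continuous there. It then applies Bowen's fibration inequality $h_{\mathrm{top}}(f_{|\supp(\mu_f)})\le h_{\mathrm{top}}(\mathcal{F}_{|\mathcal{Z}_\mu})+\sup_Z h(f,\Pi^{-1}(Z))$. The fibre term vanishes because each fibre meets $\supp(\mu_f)$ in at most $M$ points. The left-hand side equals $\log d^s$ by Sibony's computation of the topological entropy on $\supp(\mu_f)$.

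\emph{Your route.} You push $\mu_f$ down to $\nu=\Pi_*\mu_f$ and remove the fibre entropy with Abramov--Rokhlin; bounding the relative information by $\log M$ is the right estimate. You then conclude with the variational principle.

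\emph{What each buys.} Your approach produces an explicit $\mathcal{F}$-invariant measure of entropy $s\log d$. It also handles the almost-everywhere issues in Steps 1--3 for free (indeterminacy of $\Pi$, genericity in Lemma~\ref{lem_Z}), since $\mu_f$ charges no analytic set. However, it does not avoid the topological input. The variational principle in Step 4 needs exactly what Bowen's theorem needs: a compact $\mathcal{F}$-invariant set carrying $\nu$ on which $\mathcal{F}$ is continuous. The paper supplies this by asserting continuity of $\Pi$ and $\mathcal{F}$ on all of $\supp(\mu_f)$ and $\mathcal{Z}_\mu$, not merely off a null set. With that assertion your ``main obstacle'' disappears. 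The set $\mathcal{Z}_\mu$ is compact as a continuous image of $\supp(\mu_f)$, and it is invariant because $\Pi\circ f=\mathcal{F}\circ\Pi$ and $f(\supp(\mu_f))=\supp(\mu_f)$. Given the same input, the paper's route is shorter. It uses only $h_{\mathrm{top}}(f_{|\supp(\mu_f)})=\log d^s$, not the maximal-entropy property of $\mu_f$, and it needs no conditional measures.

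\emph{On your fallback.} It is essentially Bowen's argument, i.e.\ the paper's route. However, separated sets do not project with a loss of only a factor $M$: the orbit can switch between the at most $M$ points of a fibre at each time step. Bowen's proof handles this with blocks of length $m$, losing a factor of order $M^{n/m}$. This is subexponential once $m\to\infty$, so finite fibres contribute zero entropy.
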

\begin{proof}Recall that $h_{\mathrm{top}}(f_{|\supp(\mu_f)}) = \log d^s$ by \cite[Theorem 2.3.8]{Sibony}. Let $\mathcal{Z}_\mu$ denote the collection of $Z \in \mathcal{Z}$ passing through points of $\supp(\mu_f)$. Restricting diagram \eqref{eq_diag} to $\supp(\mu_f)$, we obtain
	\begin{equation*}
		\xymatrix{
			\supp(\mu_f)\ar[rr]^{f}\ar[d]^{\Pi} & & \ar[d]^{\Pi} \supp(\mu_f)\\
			\mathcal{Z}_\mu\ar[rr]^{\mathcal{F}} & & \mathcal{Z}_\mu }.
	\end{equation*}
	Observe that the maps $f$, $\Pi$, and $\mathcal{F}$ are all well-defined and continuous by Lemma~\ref{lem_Z}. Since $\Pi$ is finite-to-one on $\supp(\mu_f)$, Bowen's entropy formula for fibrations \cite[Theorem 17]{Bowen} yields
	\[
	h_{\mathrm{top}}(\mathcal{F}_{|\mathcal{Z}_\mu})
	\ge
	h_{\mathrm{top}}(f_{|\supp(\mu_f)})
	=
	\log d^s.
	\]
\end{proof}
\begin{proof}[Proof of Proposition~\ref{prop_invariance}] Recall that we assumed $\dim N_1 > 0$. However, Lemmas~\ref{lemma_entropylower} and \ref{lemma_entropyequal} together yield a contradiction. Therefore, we must have $\dim N_1 = 0$, and hence $N_1$ is finite.
\end{proof}

\begin{corollary}\label{cor_finite} The set $N$ is finite. 
\end{corollary}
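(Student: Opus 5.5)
We deduce the finiteness of $N$ from that of $N_1$ (Proposition~\ref{prop_invariance}) by controlling the determinant of the linear part. Consider the group morphism
\[
\det : N \to \C^*, \qquad \beta \mapsto \det(D\beta),
\]
where $D\beta$ is the linear part of $\beta$. Its kernel is exactly $N_1$, which is finite. Hence $N$ is finite as soon as the image $\det(N)\subset\C^*$ is finite.

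\textbf{Step 1: the image is a finite group.} Since $N$ is an algebraic group (Lemma~\ref{lemma_group}) and $\det$ is a morphism of algebraic groups, $\det(N)$ is a closed algebraic subgroup of $\C^*$. The kernel $N_1$ is finite, so $\dim N = \dim \det(N)\le 1$. The closed subgroups of $\C^*$ are either finite (groups of roots of unity) or all of $\C^*$. It therefore suffices to exclude $\dim N = 1$, that is, to exclude $\det(N)=\C^*$.

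\textbf{Step 2: excluding $\det(N)=\C^*$.} Suppose $\dim N=1$. The identity component $N^0$ is then a one-dimensional connected algebraic group whose intersection with $N_1$ is finite, and $\det : N^0\to\C^*$ is surjective with finite kernel. So $N^0$ is a torus $\C^*$ acting on $\C^k$ by affine maps. The plan is to rerun the argument of Section~\ref{sec4} with $N^0$ in place of $N_1$.

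\begin{itemize}
\item Lemmas~\ref{lemma_harmo}, \ref{lemma_constant} and \ref{lemma_otherapproch} hold for all $\beta\in N$, so the functions $\Phi^\pm$ are still defined on $N^0$.
\item The estimate of Lemma~\ref{lem_proper}, $\Phi(\beta)=\log^+\|\beta\|_{\mathrm{Aff}}+O(1)$, uses only $\beta\in N$. It therefore holds on $N^0$, which is affine because it is a torus.
\item Hence, for $z\in\supp(\mu_f)$, the orbit $N^0\cdot z$ is a closed affine curve. Its intersection with $\supp(\mu_f)$ is uniformly bounded by Lemma~\ref{lemma_otherapproch}.
\item Since $L_f$ is a bijective morphism of $N$, it preserves $N^0$, so $f(N^0 z)=N^0 f(z)$.
\end{itemize}

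Following Lemma~\ref{lem_Z}, these orbits give an $f$-equivariant meromorphic fibration $\Pi:\C^k\dasharrow\mathcal{Z}$ with $\dim\mathcal{Z}=k-1<k$. Lemmas~\ref{lemma_entropylower} and \ref{lemma_entropyequal} then give $h_{\mathrm{top}}(\mathcal{F})<\log d^s$ and $h_{\mathrm{top}}(\mathcal{F})\ge\log d^s$, a contradiction.

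\textbf{Main obstacle.} The delicate point is the properness and affineness of orbits for the non-unimodular part. For example, a dilation $z\mapsto\lambda z$ with $\lambda\to0$ degenerates to a constant map, so $\|\beta\|_{\mathrm{Aff}}$ stays bounded while $\beta$ leaves every compact subset of $\mathrm{Aff}_k$. This is why the paper restricted to $N_1$. To handle it, I would first try to show that such a degeneration is incompatible with Lemma~\ref{lemma_otherapproch}. If $\beta_j\in N^0$ tends to a constant map, then $\beta_j(z)$ stays bounded, and the equality of $\Phi^\pm(\beta)$ across all points of $\supp(\mu_f)$ forces $\beta_j(\supp\mu_f)$ to approach $\supp(\mu_f)$. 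Combined with the uniform bound $M$ on $\#\{\beta\in N : \beta(z)\in\supp\mu_f\}$ and the closedness of $N$, this should force infinitely many distinct elements of $N$ to preserve $T^\pm_f$, contradicting Lemma~\ref{lemma_affine}.

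If this degeneration argument cannot be made to work, I would fall back on an alternative route. One can use that $\beta$ preserves $\mu_f$ up to the values $\Phi^\pm$, and try to show $|\det D\beta|=1$ directly via the mass comparison $\beta_*\mu_f$ versus $\mu_f$. This would give $\det(N)\subset S^1$, which is compact, and an algebraic subgroup of $\C^*$ contained in $S^1$ is finite.
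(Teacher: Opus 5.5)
Your Step~1 reduction is fine. Step~2, however, carries all the content and has a genuine gap, which you flag yourself without closing it.

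The estimate $\Phi(\beta)=\log^+\|\beta\|_{\mathrm{Aff}}+O(1)$ does hold on all of $N$. But it makes $\beta\mapsto\beta(z)$ proper only on a subgroup that is closed in $\mathrm{End}_{\mathrm{Aff}}(\C^k)$. $N_1$ is such a subgroup, since $\det=1$ is a closed condition. Nothing you prove makes $N^0$ one. If the linearized $\C^*$-action has weights of a single sign (homotheties, say), then $\beta_\lambda(z)$ stays bounded as $\lambda\to 0$ and the orbits are not closed. So "the orbit $N^0\cdot z$ is a closed affine curve" is unjustified. With it go the uniqueness of the fiber through a generic point and the finite-to-one property of $\Pi$ on $\supp(\mu_f)$, both used in Lemmas~\ref{lem_Z} and~\ref{lemma_entropyequal}.

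Your proposed repair does not work as stated. A bounded sequence $\beta_j(z)$ does not force $\Phi^\pm(\beta_j)=G^\pm_f(\beta_j(z))\to 0$, so $\beta_j(\supp(\mu_f))$ need not approach $\supp(\mu_f)$. Even if it did, approximate membership would not trigger the exact counting bound of Lemma~\ref{lemma_otherapproch}. The fallback is also unsupported: $\beta_*\mu_f=\mu_f$ holds only when $\Phi^\pm(\beta)=0$, that is, only for $\beta$ in the finite group of Lemma~\ref{lemma_affine}. For a general $\beta\in N$ there is no mass relation to exploit.

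The idea you are missing is that $\det$ is invariant under $L_f$. The Jacobian determinant of the polynomial automorphism $f$ is a nonzero constant, so by the chain rule $\det D(f^n\circ\beta\circ f^{-n})=\det D\beta$. Hence $f^n\circ\beta\circ f^{-n}\circ\beta^{-1}\in N_1$ for all $\beta\in N$ and $n\in\N$.

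The paper uses exactly this. Finiteness of $N_1$ gives $n\neq m$ with $f^{n-m}\circ\beta=\beta\circ f^{n-m}$. Therefore $G^\pm_f\circ\beta=G^\pm_f$, so $N$ lies in the finite group of Lemma~\ref{lemma_affine}, with no second entropy argument.

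The same observation closes Step~2 inside your own framework. The map $\beta\mapsto L_f(\beta)\circ\beta^{-1}$ is continuous from the connected group $N^0$ into the finite set $N^0\cap N_1$, and it equals $\mathrm{id}$ at $\mathrm{id}$, so it is constant. Hence $f$ commutes with every element of $N^0$. Then $N^0$ preserves $G^\pm_f$ and lies in the finite group of Lemma~\ref{lemma_affine}, contradicting $\dim N^0=1$.
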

\begin{proof} 
	Pick \(\beta \in N\) and \(n\in\mathbb{N}\). Since \(N\) is a group and by the chain rule, we have \(f^{n}\circ \beta \circ f^{-n}\circ \beta^{-1}\in N_{1}\). By Proposition~\ref{prop_invariance}, \(N_{1}\) is finite, so there exist \(n\neq m\) such that \(f^{n}\circ \beta \circ f^{-n}\circ\beta^{-1}=f^{m}\circ \beta \circ f^{-m}\circ\beta^{-1}\). We deduce	
	\[
	\beta^{-1}\circ f^{\,n-m}\circ \beta = f^{\,n-m}.
	\]	
 In particular, since $ G_{f^{n-m}}^{+}= G_f^{+}$ and \( G^{+}_{\beta^{-1}\circ f^{n-m} \circ \beta}= G_{f^{n-m}}^{+}\circ \beta \), we deduce $G_f^{+}\circ \beta = G_f^{+}$ and similarly $G_f^{-}\circ \beta = G_f^{-}$. Then Lemma~\ref{lemma_affine} implies that there are only finitely many such \(\beta\).	
	\end{proof}

\begin{proof}[Proof of Theorem~\ref{tm_main}] Theorem~\ref{tm_main} is now a direct consequence of Lemma~\ref{lemma_group} and Corollary~\ref{cor_finite}.
\end{proof}

\begin{remark} \normalfont
	The finiteness of $N$ can be formulated over an arbitrary base field, not necessarily $\C$, for instance in positive characteristic. However, our approach relies heavily on complex-analytic methods. It would be interesting to obtain a purely algebraic proof of this result.
\end{remark}

\begin{remark} \normalfont
	In the statement of Theorem~\ref{tm_main}, we assume that $f$ and $g$ are Hénon--Sibony maps. It would be natural to seek an extension to the case where $f$ and $g$ become Hénon--Sibony maps only after passing to a suitable birational model. However, it is not clear in general why two maps sharing the same forward Julia set should be simultaneously conjugate to Hénon--Sibony maps in a common birational model (see \cite{DF_Manin} for related results in dimension $2$).
\end{remark}

\section{Rigidity of $\{G_f^+=a\}$ }\label{sec5}
	The proof of Theorem~\ref{tm_main} would be simpler if one knew that $\{G_f^+=a\}$ supports a unique positive closed $(s,s)$-current of mass $1$. Indeed, Lemma~\ref{lemma_constant} would then almost directly imply that $G_f^+ \circ \beta = 0$ on the support of $T^+_{f,s}$, so that $\beta$ preserves both $G_f^+$ and $G_f^-$. We will see that this property holds when $s=1$ (hence in dimension $2$), but that counterexamples exist in higher dimensions.
	
	When $a=0$, the following result is due to Forn\ae ss and Sibony~\cite{FS2} in the case $k=2$, and to Dinh and Sibony~\cite{superpotentiels} in general (without any assumption on $s$). For $a>0$ and $k=2$, Dinh and Sibony~\cite[Proposition~6.11]{DinhSibony_rigidity} proved the result under the additional assumption that $K_f^+$ is not contained in a (possibly singular) real-analytic hypersurface. As mentioned in the introduction, the general proof was communicated to us by Dinh~\cite{Dinh_personal}.
	
	\begin{proposition}\label{prop_Dinh}
		Let $f$ be a Hénon--Sibony map of $\C^k$ of degree $d$ with $s=1$. Then, for any $a \geq 0$, the current $T_a = \dd^c G_a$ is the unique positive closed current of mass $1$ supported on $\{G_f^+ = a\}$.
	\end{proposition}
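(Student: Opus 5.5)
Let $S$ be a positive closed $(1,1)$-current of mass $1$ on $\C^k$ (equivalently, of bounded mass extending trivially to $\P^k$) supported on $\{G_f^+=a\}$. For $a=0$ this is \cite[Theorem 5.5.4]{superpotentiels}, so assume $a>0$. Since $s=1$, $I_f^-$ is a single point and $I_f^+$ is a hypersurface of $H_\infty$ of dimension $k-2$. The plan is to write $S=\dd^c u$ with $u$ psh in the Lelong class and show $u=G_a+c$. On $\{G_f^+<a\}$ the current $S$ vanishes, so $u$ is pluriharmonic there. On $\{G_f^+>a\}$, which is contained in the escaping set $\C^k\setminus K_f^+$, $u$ is also pluriharmonic. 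Moreover, $G_f^+$ is pluriharmonic on $\C^k\setminus K^+_f$ when $s=1$, because $T^+_{f,2}$ is not defined there and $(T_f^+)^2=0$ off $K^+_f$. Hence the difference $u-G_a$ is pluriharmonic on both open pieces.

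\textbf{Step 1 (comparing $u$ with $G_a$).} Consider $v:=u-G_a$, a difference of Lelong-class functions, so $v=O(1)$ at infinity in the sense $|v|\le C$ away from $I_f^+$. On $\{G^+_f>a\}$, $v$ is pluriharmonic and, after pushing forward by $f^n$ and rescaling ($d^{-n}(f^n)_*$ maps $T_a$ to $T_{d^na}$ and sends currents supported on $\{G^+=a\}$ to currents supported on $\{G^+=d^na\}$), I expect to use the Böttcher-type structure: on $\{G_f^+>a\}$ near $I_f^-$ the foliation by leaves $\{G^+_f=c\}\cap\{\dd^c G^+_f\text{-leaves}\}$ (the laminar structure of $T^+_f$ outside $K^+_f$, which is a foliation by Riemann surfaces' complement, i.e. the level sets of a pluriharmonic function with holomorphic Levi foliation) forces $v$ to be bounded and pluriharmonic. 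The key identity I would aim for is $S\wedge T^+_f=0$ and $S\wedge \dd^c G_a$ computed via the Monge–Ampère argument used in Lemma~\ref{lemma_constant}: show $\d v\wedge \d^c v\wedge \omega^{k-1}$ has zero mass using the cutoff $\phi_R$ and boundedness of $v$, exactly as in the proof of Lemma~\ref{lemma_constant}.

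\textbf{Step 2 (energy argument).} Concretely, set $w=v$ bounded. Then $\dd^c w = S-T_a$ is supported on $\{G^+_f=a\}$, and I would compute $\int \d w\wedge\d^c w\wedge \omega_R$ on large balls, integrate by parts to get $-\int w\,(S-T_a)\wedge\cdots$ plus boundary terms of size $O(\|w\|_\infty^2/\log R)$. The key point is to show $\int w\, \dd^c w\wedge (\cdot)\le 0$ in a suitable sense, which needs the sign of $w$ on $\{G^+_f=a\}$. Normalizing $u$ so that $u\le G_a$ (possible by maximality: $G_a$ is maximal on $\{G_f^+>a\}$ and constant on $\{G^+_f<a\}$, and $u$ in the Lelong class is dominated on $\{G^+_f\le a\}$ by its max there by the maximum principle, then by comparison on $\{G^+_f>a\}$ since the pluricomplex Green-type function $G_a$ is extremal there), I get $w\le 0$ with $\sup w=0$. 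Then use the comparison principle/domination: two Lelong-class psh functions whose Monge–Ampère-type measures $\dd^c(\cdot)\wedge T^-_{f}{}^{k-1}$ agree. I would wedge with $(T_f^-)^{k-1}$, which is legitimate as $s=1$, and reduce to a uniqueness statement for $\dd^c u\wedge (T^-_f)^{k-1}$ on the laminar structure, and use that $\mu_a:=T_a\wedge (T^-_f)^{k-1}$ is a probability measure which is extremal. Finally, $w$ being pluriharmonic off $\{G_f^+=a\}$, bounded, and with $\dd^c w\wedge (T^-_f)^{k-1}=0$ would give $w$ constant by Liouville-type argument on leaves of $T^-_f$.

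\textbf{Main obstacle.} The hardest step is exactly the last one: showing $S\wedge (T_f^-)^{k-1}=T_a\wedge(T_f^-)^{k-1}$, i.e. that the slice of $S$ along the laminar current $(T_f^-)^{k-1}$ is forced. I would try first to use the invariance trick of Lemma~\ref{lemma_harmo}: pull back by $f^{-n}$, which sends $\{G^+=a\}$ to $\{G^+=d^{-n}a\}$, and show that any limit of $d^{n}(f^{-n})^*$-normalized currents converges to $T^+_f$ by \cite[Theorem 5.5.4]{superpotentiels}; then the uniqueness at level $a$ would follow by a contraction/convergence argument comparing $S$ and $T_a$ along the sequence, using that their difference is $\dd^c$ of a bounded function whose sup-norm is invariant under rescaling.
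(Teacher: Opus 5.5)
\textbf{There is a genuine gap: your proposal has no mechanism that forces $S=T_a$.}

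\emph{Step 2 cannot work.} For bounded $w=u-G_a$, the energy computation only yields the identity
$\int \d w\wedge \d^c w\wedge\omega^{k-1}=-\int w\,(S-T_a)\wedge\omega^{k-1}$, since the $\dd^c w^2$ term dies with the cutoff $\phi_R$. Nothing controls the sign of the right-hand side. In Lemma~\ref{lemma_constant} the argument works only because $\dd^c H_a\wedge T^+_{f,s}=0$ is known a priori, and there is no analogue here.

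\emph{The normalization $u\le G_a$ is unjustified.} The comparison principle on the unbounded domain $\{G^+_f>a\}$ needs control of $u-G_a$ near $I^+_f$, which you do not have. Even granted, it produces no sign in the identity above.

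\emph{Wedging with $(T^-_f)^{k-1}$ is insufficient.} It only tests $S$ on $\supp T^-_{f,k-1}\cap\{G^+_f=a\}$, which is a small part of the level set. So even the equality $S\wedge (T^-_f)^{k-1}=T_a\wedge(T^-_f)^{k-1}$, which you leave open, would not give $S=T_a$. There is also no laminar structure of $T^-_f$ available to run a Liouville argument.

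\emph{You never use $s=1$.} None of these steps really uses that $I^-_f$ is a point, yet the example at the end of Section~\ref{sec5} shows uniqueness fails for $s=2$.

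\emph{Minor point.} $(T^+_f)^2=0$ does not imply that $G^+_f$ is pluriharmonic off $K^+_f$. The correct reason is the B\"ottcher-type formula near the point $I^-_f$, together with invariance.

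\textbf{Your final ``contraction'' idea runs in the wrong direction.} Pulling back by $f^n$ (levels $d^{-n}a\to 0$) sends both $S$ and $T_a$ to sequences converging to $T^+_f$. That is automatic and says nothing about $S-T_a$. Also, the sup-norm of the potential is not invariant: it scales by $d^{-n}$ under $d^{-n}(f^n)^*$.

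What is needed is to write $S-T_a=d^{-n}(f^n)^*(S_n-T_{ad^n})$, where $S_n:=d^n(f^n)_*S$ has mass $1$ and is supported on $\{G^+_f=ad^n\}$. One must then bound the potential of $S_n-T_{ad^n}$ by $o(d^n)$, uniformly over all such pairs of currents. This is the key estimate of the paper, and it is exactly where $s=1$ enters:
\begin{enumerate}
\item Both currents vanish near the point $I^-_f$, so one can write $T_{1,n}-T_{2,n}=\dd^c u_n$ with the normalization $u_n(I^-_f)=0$.
\item Take $z$ with $G^+_f(z)=c>a$ and restrict to the line $L$ through $I^-_f$ and $f^n(z)$. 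This gives $u_n(f^n(z))=\langle (T_{1,n}-T_{2,n})\wedge[L],\log|1-z_1'/z_1^n|\rangle$.
\item The kernel is bounded above and below on $L\cap\{G^+_f=ad^n\}$, because $f^n(z)$ lies much closer to $I^-_f$ than that level set. Hence $|u_n(f^n(z))|\le C$ uniformly in $n$.
\item Therefore the potential $d^{-n}u_n\circ f^n$ of $S-T_a$ vanishes on $\{G^+_f>a\}$.
\item One concludes with one-variable potential theory on generic lines through $I^-_f$, where $G_a$ restricts to the Green function of the compact slice of $\{G^+_f\le a\}$.
\end{enumerate}
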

	
	\begin{proof}Pick $a>0$. Let $(T_{1,n})$ and  $(T_{2,n})$ be two sequences of positive closed currents of mass $1$ supported on  $\{ G^+_f= a d^n \}$. We want to show that 
	\[\lim_{n\to \infty} d^{-n}(f^n)^*( T_{1,n}-T_{2,n})=0.\]
	Since $\dim I^-_f = a-1=0$, $I^-_f$ is reduced to a point. We pick homogeneous coordinates $[z_1:z_2:\dots:z_{k+1}]$ on $\P^k$ so that the hyperplane at infinity is given by $z_{k+1}=0$ with $I^-_f=[1:0:\dots:0]$, and we pick the corresponding coordinates $(z_1,\dots,z_k)$ on $\C^k$.  
	Pick $z$ such that $G_f^+(z)=c>a$, then $G_f^+(f^n(z))=d^n c$. Let $L$ be the complex line joining $f^n(z)$ and $I^-_f$. We take $z_1$ as the coordinate on $L$ and we write 
	$f^n(z)=(z_1^n, \dots, z_k^n)$ so $f^n(z)$ has coordinate $z_1^n$ on $L$.

	From the fact that $G_f^+(w)=\log^+\|w\|+ O(1)$ in a neighborhood of $I^-_f$ and   $G_f^-(w)\leq \log^+\|w\|+ O(1)$, one deduces from $G_f^+(f(w))= d G_f^+(w)$ and $G_f^-(f(w))= \delta^{-1} G_f^-(w)$ that there exist a neighborhood $V$ of $I^-_f$ in $\P^k$  such that $\log|z_1|= \log^+\|w\|+O(1)$ and $\log^+\|w\|+O(1)= G^+_f > G_f^-$ on $V\cap \C^k$.

	Hence, provided $n$ is large enough, $f^n(z)\in V$ and 
	$G_f^+(f^n(z))= \log|z_1^n| + O(1)$  while for any $z'\in  V \cap \C^k$, such that $G_f^+(z')=a d^n$,  one has $ \log|z_1'| \leq G_f^+(z') +O(1) =a d^n + O(1)$. On the other hand, we have that there exist a constant $C>1$ such that for any $z' \in L \backslash V$,
	$C^{-1}\leq | 1- \frac{z_1'}{z_1^n}|\leq C $ (simply because $z^n_1$ is very large and that outside $V$, $z'_1$ can not be too large). Combining those two facts gives, changing $C$ if necessary,  
	\begin{equation}\label{eq_comparaison}
		\forall z' \in L \cap \{ G^+_f= ad^n \}, \	   C^{-1}\leq | 1- \frac{z_1'}{z_1^n}|\leq C. 
	\end{equation} 
	Since $T_{1,n}-T_{2,n}$ is smooth near $I^-_f$, we can write $ T_{1,n}-T_{2,n}= \dd^c u_n$ with $u_n$ uniquely determined by $u_n(I^-_f)=0$. We thus have by Stokes and the fundamental solution of the Laplacian on $L$
	\[u_n(f^n(z)) = \langle u_n, [L] \wedge \dd^c \log | z'_1-z_1^n|  \rangle = \langle  (T_{1,n}-T_{2,n})\wedge [L] ,  \log | z'_1-z_1^n|  \rangle. \]
	Using that $(T_{i,n}\wedge [L]$ are probability measures, we deduce:
	\[u_n(f^n(z)) = \left\langle  (T_{1,n}-T_{2,n})\wedge [L] ,  \log \left| 1-\frac{z'_1}{z^n_1} \right|  \right\rangle. \]
	By \eqref{eq_comparaison}, we deduce $ C^{-1} \leq u_n(f^n(z)) \leq C$ so letting $n\to \infty$, we have 
	\[\lim_{n\to \infty} d^{-n} u_n(f^n(z))=0.\]
	Pick $T_{1,n}= T_{a d^n}$ and let $L'$ be a generic line passing trough $I^-_f$. Then, $d^{-n}(f^n)^* T_{a d^n}= T_{a}$ and  $T_{a} \wedge[L']$ is the Green measure $\mu_{a,L'}$ of the compact set $\{ G^+_f= a \}\cap L'$ in $L'$. In particular, when extracting a converging subsequence $T_{2,n}\to S$, we have $\mu_{n,2} :=T_{2,n}\wedge [L']\to S \wedge [L']=: \mu_{S,L'}$ ($L'$ is generic) hence $\mu_{S,L'}$ is a probability measure supported on $L' \cap\{ G^+_f= a \} $. By the above, we have, on $L'$, $ \mu_{a,L'}-\mu_{2,n}= \dd^c d^{-n}u_n(f^n(z))$  and we can extract a converging subsequence in $L^1_{\mathrm{loc}}$ so if we write $\mu_{a,L'}- \mu_{S,L'}=\dd^c v$, we have that $v \equiv 0$ in $\{G^+_f >a \} \cap L'$ (up to subtracting a constant). By extremality of the Green measure, this implies that $\mu_{a,L'}=\mu_{S,L'}$  so that  $v \equiv 0$ in $\{G^+_f >a \}\cap L'$. This implies that $T_{a}=T_S$ thus $\lim_{n\to \infty} d^{-n}(f^n)^*( T_{1,n}-T_{2,n})=0$ for any sequences $(T_{1,n})$ and  $(T_{2,n})$ of positive closed currents of mass $1$ supported on  $\{ G^+_f= a d^n \}$.

	The proof of the proposition is now classical:  pick  $S$ a positive closed current of mass $1$ supported by $\{ G^+_f= a \}$, then $S_n:= d^n (f^n)_*(S)$ is a positive closed current of mass $1$ supported on $\{ G^+_f= a d^n \}$ so by the above $S=d^{-n}(f^n)^*S_n \to T_a$.     
\end{proof}

\begin{example}\normalfont 	
Let $f$ and $g$ be two Hénon maps of $\C^2$ of degree $d$, and set
\[
F(x,y,z,w)= \bigl(f(x,y),\, g(z,w)\bigr).
\]
Then $F$ is a Hénon--Sibony map of $\C^4$, with $\dim I^+_F = \dim I^-_F = 1$, so $s=2$ and
\[
G^+_F = \max(G^+_f, G^+_g).
\]
Moreover,
\[
(\dd^c \max(G^+_F,a))^2
= \dd^c \max(G^+_f,a)\wedge \dd^c \max(G^+_g,a).
\]
For $b<a$, the current
\[
\dd^c \max(G^+_f,b)\wedge \dd^c \max(G^+_g,a)
\]
is supported on $\{G^+_f=b\}\times\{G^+_g=a\} \subset \{G^+_F=a\}$, but differs from $(\dd^c \max(G^+_F,a))^2$. Hence $\{G^+_F=a\}$ carries infinitely many distinct positive closed currents of mass $1$.
\end{example}

\section{An example}\label{sec6}
Consider the Hénon--Sibony map of $\C^3$ given by 
\[
F(x,y,z) = (y + x^2,\, z + y^2,\, x),
\]
so that
\[
F^{-1}(x,y,z) = \bigl(z,\, x - z^2,\, y - (x - z^2)^2\bigr).
\]
We have $I^+_F = \{[0:0:1:0]\}$ and $I^-_F = \{[x:y:0:0] \mid (x,y)\neq (0,0)\}$, hence $I^+_F \cap I^-_F = \varnothing$. We aim to determine explicitly
\[
N = \left\{ \beta \in \mathrm{Aff}_3 \;\middle|\; \forall n \in \N,\; F^n \circ \beta \circ F^{-n} \in \mathrm{Aff}_3 \right\}.
\]
As observed above, any $\beta \in N$ fixes both $I^+_F$ and $I^-_F$, and therefore can be written in the form
\[
\begin{pmatrix}
	a & b & 0 \\
	c & d & 0 \\
	0 & 0 & e
\end{pmatrix}
\begin{pmatrix}
	x \\ y \\ z
\end{pmatrix}
+
\begin{pmatrix}
	x_0 \\ y_0 \\ z_0
\end{pmatrix}.
\]
A direct computation yields
\[
(F \circ \beta \circ F^{-1})(x,y,z)
=
\begin{pmatrix}
	c z + d(x - z^2) + y_0 + (a z + b(x - z^2) + x_0)^2 \\
	e\!\left(y - (x - z^2)^2\right) + z_0 + (c z + d(x - z^2) + y_0)^2 \\
	a z + b(x - z^2) + x_0
\end{pmatrix}.
\]
Since $F \circ \beta \circ F^{-1} \in N$, inspection of the third coordinate gives $b=0$ and $a\neq 0$. Hence
\[
(F \circ \beta \circ F^{-1})(x,y,z)
=
\begin{pmatrix}
	c z + d(x - z^2) + y_0 + (a z + x_0)^2 \\
	e\!\left(y - (x - z^2)^2\right) + z_0 + (c z + d(x - z^2) + y_0)^2 \\
	a z + x_0
\end{pmatrix}.
\]
Examining the first coordinate yields $c = -2a x_0$ and $d = a^2$, while the second coordinate gives $e = d^2 = a^4$. Thus
\[
(F \circ \beta \circ F^{-1})(x,y,z)
=
\begin{pmatrix}
	a^2 x + y_0 + x_0^2 \\[6pt]
	a^4 y - 4 a^3 x_0 x z + 4 a^3 x_0 z^3 
	+ 4 a^2 x_0^2 z^2 + 2 a^2 y_0 x - 2 a^2 y_0 z^2 \\
	\qquad - 4 a x_0 y_0 z + y_0^2 + z_0 \\[6pt]
	a z + x_0
\end{pmatrix}.
\]
Since $a \neq 0$, the presence of the nonlinear term $-4 a^3 x_0 x z$ in the second coordinate forces $x_0 = 0$. It then follows (for instance from the first and third coordinates) that $y_0 = 0$ and $z_0 = 0$. Hence
\[
(F \circ \beta \circ F^{-1})(x,y,z)
=
\begin{pmatrix}
	a^2 x \\
	a^4 y \\
	a z
\end{pmatrix}.
\]
Applying the same argument to $F \circ \beta \circ F^{-1}$ shows that $a = (a^2)^4$, hence $a^7 = 1$. In conclusion,
\[
N = \left\{
\begin{pmatrix}
	a & 0 & 0 \\
	0 & a^2 & 0 \\
	0 & 0 & a^4
\end{pmatrix}
\begin{pmatrix}
	x \\ y \\ z
\end{pmatrix}
\;\middle|\; a^7 = 1
\right\}.
\]
One can further observe that for $a \neq 1$, $\beta $ and $F$ do not commute as we have
\[
\beta \circ F \circ \beta^{-1}(x,y,z)
=
\bigl(a^{-2}(x^2 + y),\, a^3(z + y^2),\, a^{-1}x\bigr)
\neq F(x,y,z).
\]

  \subsection*{Acknowledgment} 
  The author is grateful to T.C. Dinh for communicating the proof of Proposition~\ref{prop_Dinh}. He also thanks I. Marin, D. Chataur and L. Hennecart for helpful discussions.

\end{document}